\numberwithin{equation}{section}
\newtheorem{thm}{Theorem}[section]
\newtheorem{cor}[thm]{Corollary}
\newtheorem{lem}[thm]{Lemma}
\newtheorem{prop}[thm]{Proposition}
\theoremstyle{definition}
\newtheorem{defn}[thm]{Definition}
\newtheorem{rem}[thm]{Remark}
\numberwithin{equation}{section}
\def\epsilon{\varepsilon}
\newcommand{\N}{\mathbb{N}}
\newcommand{\R}{\mathbb{R}}
\newcommand{\C}{\mathbb{C}}
\newcommand{\uno}{\mathds{1}}
\newcommand{\Ccal}{\mathcal{C}}
\newcommand{\spn}{\mathrm{span}}
\newcommand{\lat}{\mathrm{lat}}
\newcommand{\fbl}{\mathrm{FBL}}
\newcommand{\fbp}{{\mathrm{FBL}}^{(p)}}
\newcommand{\FBLi}{\mathrm{FBL}^{(\infty)}}
\title[Free dual Banach spaces and lattices]{Free dual spaces and free Banach lattices} 
\subjclass[2020]{46B42, 46B10, 46A11}
\author[E. Garc\'ia-S\'anchez]{E.~Garc\'ia-S\'anchez}
\address{Instituto de Ciencias Matem\'aticas (CSIC-UAM-UC3M-UCM)\\
Consejo Superior de Investigaciones Cient\'ificas\\
C/ Nicol\'as Cabrera, 13--15, Campus de Cantoblanco UAM\\
28049 Madrid, Spain.}
\email{enrique.garcia@icmat.es}
\author[P. Tradacete]{P.~Tradacete}
\address{Instituto de Ciencias Matem\'aticas (CSIC-UAM-UC3M-UCM)\\
Consejo Superior de Investigaciones Cient\'ificas\\
C/ Nicol\'as Cabrera, 13--15, Campus de Cantoblanco UAM\\
28049 Madrid, Spain.}
\email{pedro.tradacete@icmat.es}
\date{\today}
\keywords{Free Banach lattice; dual Banach space; dual Banach lattice; local reflexivity principle.}
\thanks{Research partially supported by grants PID2020-116398GB-I00 and CEX2019-000904-S funded by MCIN/AEI/10.13039/501100011033. E. Garc\'ia-S\'anchez was partially supported by FPI grant CEX2019-000904-S-21-3 funded by MCIN/AEI/10.13039/501100011033 and by ``European Union NextGenerationEU/PRT''. P. Tradacete is also supported by a 2022 Leonardo Grant for Researchers and Cultural Creators, BBVA Foundation}
\begin{document}

\begin{abstract}
The relation between the free Banach lattice generated by a Banach space and free dual spaces is clarified. In particular, it is shown that for every Banach space $E$ the free $p$-convex Banach lattice generated by $E^{**}$, denoted $\fbp[E^{**}]$, admits a canonical isometric lattice embedding into $\fbp[E]^{**}$ and $\fbp[E^{**}]$ is lattice finitely representable in $\fbp[E]$. Moreover, we also show that for $p>1$, $\fbp[E]^{**}$ can actually be considered as the free dual $p$-convex Banach lattice generated by $E$, whereas for $p=1$ this happens precisely when $E$ does not contain complemented copies of $\ell_1$.
\end{abstract}

\date{\today}
\maketitle

\section{Introduction}

The purpose of this note is to clarify the relation between free Banach lattices, free dual spaces and free dual Banach lattices generated by a Banach space.\\

Recall that given a Banach space $E$, the free Banach lattice generated by $E$ is a Banach lattice $\fbl[E]$, equipped with a linear isometric embedding $\phi_E:E\rightarrow \fbl[E]$ with the following universal property: for every Banach lattice $X$ and every bounded linear operator $T:E\rightarrow X$, there is a unique lattice homomorphism $\hat T:\fbl[E]\rightarrow X$ such that $\hat T \circ \phi_E=T$, with $\|\hat T\|=\|T\|$. The existence of $\fbl[E]$ together with an explicit construction was given in \cite{ART}, motivated by ideas in \cite{dePW}, and has been the object of intense research in the recent years (see for instance \cite{AMR2} for the relation with projectivity, \cite{AMRT} for lifting properties, \cite{DMRR2} for the connections with norm-attaining lattice homomorphisms, or \cite{dHT} for developments in the setting of complex scalars).\\

The construction of $\fbl[E]$ is a particular instance of the general scheme of free objects in a certain category, generated by an object in a larger category (see \cite{GHT} for a recent survey on free objects related to Banach spaces). In the case considered above, one can take the category $\mathcal{BL}$ of Banach lattices with lattice homomorphisms and the category $\mathcal{B}an$ of Banach spaces with bounded linear operators. Thus, for an object $E$ in $\mathcal{B}an$, $\fbl[E]$ is an object in $\mathcal{BL}$, and moreover, for each morphism in $\mathcal{B}an$ (that is, a bounded linear operator) $T:E\rightarrow F$ between objects from $\mathcal{B}an$, we can associate a morphism in $\mathcal{BL}$ (a lattice homomorphism) $\overline{T}:\fbl[E]\rightarrow \fbl[F]$ which is given by $\overline{T}=\widehat{\phi_F \circ T}$. Thus, this construction provides a connection between Banach spaces and Banach lattices which has proven to be a useful tool for the study of the interactions between their properties (see \cite{OTTT} for an up-to-date account on this connection).\\

In an entirely analogous manner, one can consider the category $\mathcal{B}an^*$ of dual Banach spaces with adjoint (equivalently, weak*-to-weak* continuous) operators. In this setting, it is easy to check that for a Banach space $E$, the second dual $E^{**}$ (together with the canonical embedding $J_E:E\rightarrow E^{**}$) provides the \textit{free dual Banach space} generated by $E$, as for every operator into a dual Banach space $T:E\rightarrow X^*$ there is a unique weak*-to-weak* continuous extension to $E^{**}$ given by $J_X^* \circ T^{**}$. \\

The situation with dual Banach lattices is somewhat more involved. Note first that it is an open problem whether every Banach lattice which is a dual Banach space is actually the dual of a Banach lattice (for separable dual spaces this was solved in the affirmative by Talagrand in \cite{Talagrand}). Here, we will consider the category $\mathcal{BL}^*$ of duals of Banach lattices, whose morphisms are weak*-to-weak* continuous lattice homomorphisms. It will be relevant for this to keep in mind that the adjoints of lattice homomorphisms are the (almost) interval preserving operators \cite[Theorem 1.4.19]{MeyerNieberg}. Our aim is to explore the existence of free objects in $\mathcal{BL}^*$ generated by a Banach space. Our main result (\Cref{theo: bidual of FBL is FBL*E iff ell1 not complemented in E}) is that $\fbl[E]^{**}$ can be identified with the free dual Banach lattice generated by the Banach space $E$ precisely when $E$ does not contain any complemented subspace isomorphic to $\ell_1$.\\

Motivated by this, we will also compare the free dual space of the free Banach lattice generated by a Banach space with the free Banach lattice generated by the corresponding free dual space. Namely, we will show that for every Banach space $E$, there is a canonical lattice isometric embedding of $\fbp[E^{**}]$ into $\fbp[E]^{**}$. Indeed, consider the canonical embedding $\phi_E:E\rightarrow \fbp[E]$, take its double adjoint $\phi_E^{**}:E^{**}\rightarrow \fbp[E]^{**}$, and finally, extend it to the unique lattice homomorphism $\widehat{\phi_E^{**}}:\fbp[E^{**}]\rightarrow \fbp[E]^{**}$. By means of a careful application of the principle of local reflexivity, we will prove that $\widehat{\phi_E^{**}}$ is a lattice isometric embedding. Moreover, using our technique and an argument for computing locally the free norm given in \cite{Oikhberg} we can also show that $\fbp[E^{**}]$ is lattice finitely representable in $\fbp[E]$.\\

The paper is organized as follows: Section \ref{sec:preliminares} is devoted to some preliminaries on the construction of the free $p$-convex Banach lattice $\fbp[E]$ and the local reflexivity principle. In Section \ref{sec:bidual embedding}, we show that there is a canonical lattice isometric embedding of $\fbp[E^{**}]$ into $\fbp[E]^{**}$. With similar techniques we can also show that $\fbp[E^{**}]$ is lattice finitely representable in $\fbp[E]$. In Section \ref{sec:free dual BL} we study the free dual Banach lattice generated by a Banach space. First, it is shown that for $p>1$, the free $p$-convex dual Banach lattice generated by a Banach space $E$ coincides with $\fbp[E]^{**}$. Next, we focus on the more complicated case of $p=1$, showing that this only happens when the Banach space $E$ does not contain complemented subspaces isomorphic to $\ell_1$. Finally, in Section \ref{sec:AM-spaces}, we generalize some of these results to the category $\mathcal{AL}^*$ of duals of $AL$-spaces.

\section{Preliminary computations and local reflexivity}\label{sec:preliminares}

For convenience and generality, we will state our results in the setting of free $p$-convex Banach lattices, which generalize the construction of $\fbl[E]$ (see \cite{JLTTT}).\\ 

Recall that, for $1\leq p \leq \infty$, the free $p$-convex Banach lattice over a Banach space $E$ is a $p$-convex Banach lattice $\fbp[E]$, together with a linear isometric embedding $\phi_E: E\rightarrow \fbp[E]$, such that for every bounded linear operator $T$ from $E$ to a $p$-convex Banach lattice $X$ there exists a unique lattice homomorphism $\hat{T}:\fbp[E]\rightarrow X$ such that $\hat{T}\circ \phi_E =T$, with $\norm[0]{\hat{T}}\leq M^{(p)}(X)\norm{T}$, where $M^{(p)}(X)$ denotes the $p$-convexity constant of $X$.\\

Let us also recall the explicit construction of $\fbp[E]$. Denote by $H[E]$ the set of functions $f:E^*\rightarrow~\R$ which are positively homogeneous, and for $1\leq p<\infty$ define
\begin{equation}\label{equa: norm HpE}
	\|f\|_{\fbp[E]}:=\sup \cbr{\intoo[3]{\sum_{i=1}^m \abs[0]{f(x_i^*)}^p}^{\frac{1}{p}}: \sup_{x\in B_E} \intoo[3]{\sum_{i=1}^m \abs[0]{x_i^*(x)}^p}^{\frac{1}{p}}\leq 1}
\end{equation}
where $m\in\N$ and $x_1^*,\ldots,x_m^*\in E^*$, while for $p=\infty$ set
\begin{equation}\label{equa: norm HinfE}
	\|f\|_{\FBLi[E]}:=\sup \cbr{\abs[0]{f(x^*)}: x^*\in B_{E^*}}.
\end{equation}
It is straightforward to check that the space $$H_p[E]:=\cbr{f\in H[E]: 	\|f\|_{\fbp[E]}<\infty}$$ is a Banach lattice when endowed with the pointwise order and lattice operations and the above norm. Defining $\phi_E: E \rightarrow H_p[E]$ by 
\begin{equation}\label{equa: definition phi_E}
    \phi_Ex(x^*)=x^*(x)
\end{equation} for every $x\in E$ and $x^*\in E^*$, it can be shown that $\phi_E$ is a linear isometric embedding and the closed sublattice of $H_p[E]$ generated by $\phi_E(E)$ satisfies the universal property defining the free $p$-convex Banach lattice generated by $E$ \cite[Theorem 6.1]{JLTTT}.\\

Let $E, F$ be Banach spaces and $1\leq p \leq \infty$. For a function $f:E\rightarrow F$ consider the following expression for $1\leq p <\infty$
\begin{equation}\label{eq: norm of HpEF}
    \norm{f}_{\mathcal{H}_p(E,F)}:=\sup \left\{\Big(\sum_{i=1}^m |y^*(f(x_i))|^p\Big)^{\frac{1}{p}}: \sup_{x^*\in B_{E^*}}\sum_{i=1}^m |x^*(x_i)|^p\leq 1\right\},
\end{equation}
where $y^*\in B_{F^*}$, $(x_i)_{i=1}^m\subset E$ and $m\in\mathbb N$ is arbitrary, and
\begin{equation}\label{eq: norm of HinfinityEF}
    \norm{f}_{\mathcal{H}_{\infty}(E,F)}:=\sup \left\{\norm{f(x)}_F: x\in B_E \right\}
\end{equation}
when $p=\infty$. Let us denote
\begin{align*}
    \mathcal H_p(E,F)=\{f:E\rightarrow F: & f\text{ is positively homogeneous and } \\
    &\norm{f}_{\mathcal{H}_p(E,F)}<\infty \}.
\end{align*}
These spaces, which are Banach spaces with the norm given by \eqref{eq: norm of HpEF}, are relevant in the study of lattice homomorphisms between $\fbp$ spaces (see \cite{Laust-Tra, OTTT}). When $F=\mathbb R$, we can equip $\mathcal H_p(E,\mathbb R)$ with the pointwise order and lattice operations, and it becomes a Banach lattice with the above norm. In particular, for every Banach space $E$ we have that $H_p[E]=\mathcal{H}_p(E^*,\mathbb R)$.
\medskip

Given a Banach space $E$, its dual $E^*$ embeds into $\mathcal{H}_p(E^{**},\mathbb R)$ through the isometric embedding $\phi_{E^*}$ defined by \eqref{equa: definition phi_E}. In the same way, $E^*$ can be embedded into $\mathcal{H}_p(E,\mathbb R)$ by means of the map
\begin{equation*}
    \fullfunction{\psi_{E^*}}{E^*}{\mathcal{H}_p(E,\mathbb R)}{x^*}{\psi_{E^*}x^*=x^*,}
\end{equation*}
which is again a linear isometry. Let us consider the composition operator
\begin{equation*}
    \fullfunction{C_{J_E}}{\mathcal{H}_p(E^{**},\mathbb R)}{\mathcal{H}_p(E,\mathbb R)}{h}{C_{J_E}h=h\circ J_E,}
\end{equation*}
where $J_E:E\rightarrow E^{**}$ is the canonical embedding. This operator is well defined and has norm one, since
\begin{equation}\label{equa: w-summing on E implies w-summing on E**}
    \sup_{x^*\in B_{E^*}}\intoo[3]{\sum_{i=1}^m \abs[0]{x^*(x_i)}^p}^{\frac{1}{p}} = \sup_{x^*\in B_{E^*}}\intoo[3]{\sum_{i=1}^m \abs[0]{J_Ex_i(x^*)}^p}^{\frac{1}{p}}
\end{equation}
for every $(x_i)_{i=1}^m\subset E$. It is also a lattice homomorphism, because $\mathcal{H}_p(E,\mathbb R)$ and $\mathcal{H}_p(E^{**},\mathbb R)$ are equipped with the pointwise lattice operations, and it satisfies that $C_{J_E}\circ \phi_{E^*}=\psi_{E^*}$.\\

We can also define the ``extension by zero'' operator $\kappa_{E}:\mathcal{H}_p(E,\mathbb R)\rightarrow \mathcal{H}_p(E^{**},\mathbb R)$ given for $g\in \mathcal{H}_p(E,\mathbb R)$ by
\begin{equation*}
    \kappa_{E}g(x^{**})=
    \begin{cases}
		g(J_E^{-1}x^{**}) & \text{if } x^{**}\in J_E(E),\\
		0 & \text{if } x^{**}\notin J_E(E).
	\end{cases}
\end{equation*}
    
\begin{lem}\label{prop: GpE* embeds into HpE*}
    For every Banach space $E$, $C_{J_E}:\mathcal{H}_p(E^{**},\mathbb R)\rightarrow \mathcal{H}_p(E,\mathbb R)$ is a surjective lattice homomorphism, and $\kappa_E:\mathcal{H}_p(E,\mathbb R)\rightarrow \mathcal{H}_p(E^{**},\mathbb R)$ is an isometric lattice embedding such that $C_{J_E}\circ \kappa_{E}= I_{\mathcal{H}_p(E,\mathbb R)}$. In particular, $\mathcal{H}_p(E^{**},\mathbb R)$ contains a lattice isometric copy of $\mathcal{H}_p(E,\mathbb R)$ complemented by a lattice projection.
\end{lem}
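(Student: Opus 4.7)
The plan is to verify four essential properties of $\eta_E$ in turn---positive homogeneity of $\eta_E g$, preservation of the pointwise lattice operations, the isometric identity $\|\eta_E g\|_{\mathcal{H}_p(E^{**},\R)}=\|g\|_{\mathcal{H}_p(E,\R)}$, and the right-inverse relation $C_{J_E}\circ\eta_E=I_{\mathcal{H}_p(E,\R)}$---from which every remaining assertion in the statement follows formally. Positive homogeneity rests on the observation that $J_E(E)$ is a linear subspace of $E^{**}$ and hence invariant under scalar multiplication, so $x^{**}\in J_E(E)$ iff $\lambda x^{**}\in J_E(E)$ for every $\lambda>0$; combined with the linearity of $J_E^{-1}$ on $J_E(E)$ and the positive homogeneity of $g$, this handles both branches of the case-split simultaneously. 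That $\eta_E$ commutes with the pointwise lattice operations is then immediate, because the identities $0\vee 0=0\wedge 0=0$ take care of the branch $x^{**}\notin J_E(E)$.

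The main technical step is the isometric equality, which I would establish by two matching inequalities based on \eqref{equa: w-summing on E implies w-summing on E**}. For $\|\eta_E g\|\leq\|g\|$, I take any finite sequence $(x_i^{**})_{i=1}^m\subset E^{**}$ satisfying $\sup_{x^*\in B_{E^*}}\bigl(\sum_i|x^*(x_i^{**})|^p\bigr)^{1/p}\leq 1$, set $I:=\{i:x_i^{**}\in J_E(E)\}$, and note that the indices outside $I$ contribute $0$ to $\sum_i|\eta_E g(x_i^{**})|^p$. Writing $x_i:=J_E^{-1}x_i^{**}$ for $i\in I$, equation \eqref{equa: w-summing on E implies w-summing on E**} together with monotonicity in the index set gives $\sup_{x^*\in B_{E^*}}\bigl(\sum_{i\in I}|x^*(x_i)|^p\bigr)^{1/p}\leq 1$, so the definition of $\|g\|_{\mathcal{H}_p(E,\R)}$ yields $\bigl(\sum_{i\in I}|g(x_i)|^p\bigr)^{1/p}\leq\|g\|$. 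The reverse inequality is the easy direction: any admissible $(x_i)\subset E$ for $\|g\|$ pushes forward to an admissible $(J_Ex_i)\subset J_E(E)$ for $\|\eta_E g\|$ with the same $p$-sum, again by \eqref{equa: w-summing on E implies w-summing on E**}. The right-inverse identity is then the one-line computation $(C_{J_E}\eta_E g)(x)=\eta_E g(J_E x)=g(J_E^{-1}J_Ex)=g(x)$.

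With these four points in hand, $\eta_E$ is an isometric lattice embedding by construction, $C_{J_E}$ is surjective because it admits a right inverse, and $P:=\eta_E\circ C_{J_E}$ is a norm-one lattice projection onto $\eta_E(\mathcal{H}_p(E,\R))$: it is idempotent since $P^2=\eta_E(C_{J_E}\eta_E)C_{J_E}=\eta_E C_{J_E}=P$, and it is a lattice homomorphism as the composition of two such. The only point requiring genuine care is the $\leq$ direction of the isometry, where one must verify that restricting to the index set $I$ neither inflates the weakly $p$-summing constraint (automatic, since passing to a sub-sum only decreases it) nor distorts the computation under $J_E^{-1}$ (which is exactly what \eqref{equa: w-summing on E implies w-summing on E**} guarantees); everything else reduces to routine bookkeeping with the explicit formula for $\eta_E$.
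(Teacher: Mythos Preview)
Your proposal is correct and follows essentially the same route as the paper: both arguments verify positive homogeneity of $\eta_E g$ via the linearity of $J_E(E)$, prove the isometric equality by the two matching inequalities (splitting off the indices lying in $J_E(E)$ and invoking \eqref{equa: w-summing on E implies w-summing on E**} in each direction), and then dispatch the lattice-homomorphism and right-inverse properties as routine checks. Your write-up is somewhat more explicit than the paper's---in particular you spell out the idempotence of $P=\eta_E\circ C_{J_E}$ where the paper merely says ``it follows''---but there is no substantive difference in strategy.
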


\begin{proof}
    Let $1\leq p <\infty$ (the case $p=\infty$ follows with the obvious modifications). Firstly, let us check that $\kappa_{E}$ is well defined. Given $g\in \mathcal{H}_p(E,\mathbb R)$, $\kappa_{E}g$ is always positively homogeneous, since $J_E(E)$ is a linear subspace of $E^{**}$ and $g$ is already positively homogeneous on $E$. In order to estimate $\norm[0]{\kappa_{E}g}_{\mathcal{H}_p(E^{**},\mathbb R)}$, we choose an arbitrary sequence $(x_i^{**})_{i=1}^m\subset E^{**}$, $m\in \N$, that we can assume without loss of generality to be of the form $\cbr{J_Ex_1,\ldots,J_Ex_n, x_{n+1}^{**},\ldots,x_m^{**}}$ for some $0\leq n\leq m$. It follows that
    \begin{align*}
        \intoo[3]{\sum_{i=1}^m \abs[0]{\kappa_{E}g(x_i^{**})}^p}^{\frac{1}{p}} & = \intoo[3]{\sum_{i=1}^n \abs[0]{\kappa_{E}g(J_Ex_i)}^p}^{\frac{1}{p}} = \intoo[3]{\sum_{i=1}^n \abs[0]{g(x_i)}^p}^{\frac{1}{p}}\\
        & \leq \norm[0]{g}_{\mathcal{H}_p(E,\mathbb R)}\sup_{x^*\in B_{E^*}}\intoo[3]{\sum_{i=1}^n \abs[0]{x^*(x_i)}^p}^{\frac{1}{p}}\\
        &= \norm[0]{g}_{\mathcal{H}_p(E,\mathbb R)}\sup_{x^*\in B_{E^*}}\intoo[3]{\sum_{i=1}^n \abs[0]{J_Ex_i(x^*)}^p}^{\frac{1}{p}}\\
        & \leq \norm[0]{g}_{\mathcal{H}_p(E,\mathbb R)} \sup_{x^*\in B_{E^*}}\intoo[3]{\sum_{i=1}^m \abs[0]{x_i^{**}(x^*)}^p}^{\frac{1}{p}},
    \end{align*}
    so $\norm[0]{\kappa_E g}_{\mathcal{H}_p(E^{**},\mathbb R)}\le \norm[0]{g}_{\mathcal{H}_p(E,\mathbb R)}$ and $\kappa_{E}$ is well defined and bounded. On the other hand, given a sequence $(x_i)_{i=1}^m\subset E$ we observe that
    \begin{align*}
        \intoo[3]{\sum_{i=1}^m \abs[0]{g(x_i)}^p}^{\frac{1}{p}} & = \intoo[3]{\sum_{i=1}^m \abs[0]{\kappa_{E}g(J_Ex_i)}^p}^{\frac{1}{p}} \\
        & \leq \norm[0]{\kappa_E g}_{\mathcal{H}_p(E^{**},\mathbb R)} \sup_{x^*\in B_{E^*}}\intoo[3]{\sum_{i=1}^m \abs[0]{J_Ex_i(x^*)}^p}^{\frac{1}{p}} \\
        & = \norm[0]{\kappa_E g}_{\mathcal{H}_p(E^{**},\mathbb R)}  \sup_{x^*\in B_{E^*}}\intoo[3]{\sum_{i=1}^m \abs[0]{x^*(x_i)}^p}^{\frac{1}{p}},
    \end{align*}
    so we can conclude that $\kappa_{E}$ is an isometry. It can be easily checked that $\kappa_{E}$ is a lattice homomorphism and that $C_{J_E}\circ \kappa_{E}= I_{\mathcal{H}_p(E,\mathbb R)} $.
\end{proof}

\begin{rem}\label{rema: lambda E* not injective}
    Note that in general $C_{J_E}$ is not injective. For instance, suppose that $E$ is not reflexive. By the Hahn--Banach Theorem there exists a non-zero functional $x^{***}\in E^{***}$ such that $x^{***}=0$ on $J_E(E)$. Since $E^{***}$ embeds canonically into $\mathcal{H}_p(E^{**},\mathbb R)$, we can consider $C_{J_E}x^{***}=x^{***}\circ J_{E}$, which is identically zero by construction.
\end{rem}

Nevertheless, we can consider $\fbp[E^*]$ as a sublattice of $\mathcal{H}_p(E^{**},\mathbb R)$, and we will see in \Cref{theo: FBL of bidual embeds into bidual of FBL} that when restricting $C_{J_E}$ to $\fbp[E^*]$ it turns out that it is an isometric embedding. For the proof of this, we need first to recall the Principle of Local Reflexivity (cf. \cite[Theorem 6.3]{FHHMZBanachSpaceTheory}) and make a careful analysis of the approximation parameters involved in the version given next.

\begin{thm}\label{theo: principle of local reflexivity}
	Let $X$ be a Banach space. For any finite-dimensional subspaces $U\subset X^{**}$ and $V\subset X^*$ and $\epsilon>0$, there exists a linear isomorphism $S$ of $U$ onto $S(U)\subset X$ such that $\norm{S}\norm[0]{S^{-1}}\leq 1+\epsilon$, $x^*(Sx^{**})=x^{**}(x^*)$ for every $x^*\in V$ and $x^{**}\in U$, and $S$ is the identity on $U\cap J_X(X)$.
\end{thm}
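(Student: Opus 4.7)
The statement is the classical Principle of Local Reflexivity of Lindenstrauss and Rosenthal, so the plan is to reproduce the standard textbook argument. The technical workhorse is a pointwise Helly--Goldstine lemma: for any $u^{**}\in X^{**}$, any finite-dimensional $V\subset X^{*}$, and any $\delta>0$, there exists $y\in X$ with $\|y\|\leq (1+\delta)\|u^{**}\|$ and $x^{*}(y)=u^{**}(x^{*})$ for all $x^{*}\in V$. This follows from the canonical isometric identification of $X/V_{\perp}$ with $V^{*}$ (so the restriction $X\to V^{*}$ is a metric surjection) together with the observation that $\|u^{**}|_{V}\|_{V^{*}}\leq \|u^{**}\|$, so the desired lift exists with norm arbitrarily close to optimal.

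First I would enlarge $V$ to a larger finite-dimensional $V'\subset X^{*}$ which is $(1-\delta)$-norming for $U$. This is possible because $B_{U}$ is norm-compact: take a finite $\delta$-net in the unit sphere of $U$ and, for each of its points, a norming functional in $B_{X^{*}}$. Once $V'$ is in place, any linear $S\colon U\to X$ satisfying the duality condition on $V'$ automatically has $\|Su\|\geq (1-\delta)\|u\|$ for every $u\in U$, which yields $\|S^{-1}\|\leq (1-\delta)^{-1}$.

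Next I would construct $S$ by splitting over the algebraic decomposition $U=U_{0}\oplus U_{1}$ with $U_{0}=U\cap J_{X}(X)$. On $U_{0}$ set $S=J_{X}^{-1}$; this is automatically compatible with the duality condition on $V'$. On $U_{1}$, the goal is a linear map with the duality condition on $V'$ and operator norm $\leq 1+\delta$. Combining this with the previous step would then give $\|S\|\,\|S^{-1}\|\leq (1+\delta)/(1-\delta)$, which is $\leq 1+\epsilon$ for $\delta$ sufficiently small, proving the theorem (the duality condition for $V$ is a fortiori satisfied because $V\subset V'$).

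The hard part is the \emph{joint} upgrade of Helly--Goldstine, i.e., producing a single linear map $T\colon U_{1}\to X$ (not just independent lifts of basis vectors) with the duality condition on $V'$ and operator norm close to $1$. Applying the pointwise lemma to a basis only controls $\|Tu_{i}\|$ individually, which naively yields only $\|T\|\leq n(1+\delta)$ with $n=\dim U_{1}$. The standard way to overcome this is to note that the set $A\subset \mathcal{L}(U_{1},X)$ of admissible $T$ is a nonempty affine subspace (a translate of $\mathcal{L}(U_{1},V_{\perp})$) and to show that $\inf_{T\in A}\|T\|$ equals the operator norm of the canonical restriction $\iota\colon U_{1}\to V^{*}$, $u\mapsto u|_{V}$, which is at most $1$. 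This lifting statement is the crux, and I would prove it by a Hahn--Banach separation argument carried out in the finite-dimensional space $\mathcal{L}(U_{1},X)$, using the pointwise Helly--Goldstine lemma (equivalently, duality against elements of $\mathcal{L}(U_{1},X)^{*}\cong U_{1}\otimes_{\pi}X^{*}$) to contradict any would-be separating functional.
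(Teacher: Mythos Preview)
The paper does not give its own proof of this theorem: it is stated with a reference to \cite[Theorem~6.3]{FHHMZBanachSpaceTheory}, and the subsequent paragraph only recalls the structure of that textbook proof (a $\delta$-net $\{b_1^{**},\dots,b_n^{**}\}$ of $S_U$ and an explicit construction of $S$) in order to extract the quantitative estimate $\|S\|\le (1+\delta)/(1-\delta)$, which is what Lemma~2.3 and Proposition~3.1 actually use. So your proposal is not competing with a proof in the paper but with the cited $\delta$-net argument; your Helly--Goldstine/lifting route is the other classical approach, and either is acceptable provided it yields $\|S\|\to 1$ as $\epsilon\to 0$.

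That said, your sketch has a genuine gap at the ``combining'' step. Defining $S$ blockwise on $U=U_0\oplus U_1$ with $S|_{U_0}=J_X^{-1}$ (norm~$1$) and $\|T\|\le 1+\delta$ on $U_1$ does \emph{not} give $\|S\|\le 1+\delta$ on $U$: for $u=u_0+u_1$ with $\|u\|=1$ one only gets $\|Su\|\le \|u_0\|+(1+\delta)\|u_1\|$, and the projection constant of an arbitrary algebraic complement $U_1$ can be large. The lower bound $\|S^{-1}\|\le (1-\delta)^{-1}$ is fine (it follows from the duality condition on the norming $V'$, independently of how $S$ was built), but the claimed product bound $\|S\|\,\|S^{-1}\|\le (1+\delta)/(1-\delta)$ does not follow from your construction. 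The usual repairs are either (i) to run the lifting argument on the full constrained affine set $\{S:U\to X:\ x^*(Su)=u(x^*)\ \forall x^*\in V',\ S|_{U_0}=J_X^{-1}\}$ and show directly that $\inf\|S\|\le 1$ over that set, or (ii) to first obtain an unconstrained $S_0$ with $\|S_0\|\le 1+\delta$ and the duality condition, and then perturb by an operator $U\to (V')_{\perp}$ to enforce $S|_{U_0}=J_X^{-1}$, controlling the perturbation via an Auerbach basis of $U_0$ and the fact that $S_0|_{U_0}-J_X^{-1}$ already lands in $(V')_{\perp}$ with small norm. A minor side remark: $\mathcal{L}(U_1,X)$ is not finite-dimensional; your duality identification $\mathcal{L}(U_1,X)^*\cong U_1\otimes_\pi X^*$ is nevertheless correct (since $\dim U_1<\infty$), and Hahn--Banach separation needs no finite-dimensionality, so this slip is harmless.
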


In the standard proof of this result, in order to construct the operator $S$ (see \cite[Theorem 6.3]{FHHMZBanachSpaceTheory} for instance) a $\delta$-net $\cbr{b_1^{**},\ldots, b_n^{**}}$ of the unit sphere $S_U$ satisfying some additional properties is needed, with $\delta$ chosen in such a way that
\begin{equation*}
	\theta(\delta):= \frac{1+\delta}{1-\delta} \left(\frac{1}{1+\delta}-\delta \frac{1+\delta}{1-\delta} \right)^{-1}<1+\epsilon.
\end{equation*}
The operator $S$ can then be defined, so that
\begin{equation*}
	(1+\delta)^{-1}\leq \norm[0]{Sb_i^{**}} \leq 1+\delta
\end{equation*}
for each $i=1,\ldots,n$. Under these conditions, it is straightforward to check (see \cite[Ex. 1.76]{FHHMZBanachSpaceTheory}) that 
\begin{equation}\label{equa: estimate norm S local reflexivity}
	\norm{S}\leq \frac{1+\delta}{1-\delta}\,\,\,\text{and}\,\,\, \norm[0]{S^{-1}}\leq \left(\frac{1}{1+\delta}-\delta \frac{1+\delta}{1-\delta} \right)^{-1}.
\end{equation}
In particular, $S$ is invertible and $\norm{S}\norm[0]{S^{-1}}\leq 1+\epsilon$. The above estimates on the norm of $S$ and its inverse lead us to study the behaviour of $\delta\equiv \delta(\epsilon)$ when $\epsilon$ is small. The following technical lemma shows more precisely how this dependence on $\epsilon$ can be described.

\begin{lem}\label{lemm: technical delta}
	There exists some $\epsilon_0>0$ such that for any $\epsilon \in (0,\epsilon_0)$ there is a $\delta\equiv \delta(\epsilon)\in \intoo[1]{0,\frac{\epsilon}{1+\epsilon}}$ such that $\theta(\delta)=1+\epsilon$.
\end{lem}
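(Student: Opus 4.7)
The plan is to treat $\theta$ as an explicit rational function and combine the intermediate value theorem with a single polynomial inequality. First, I would clear fractions in the definition of $\theta$ and rewrite
$$\theta(\delta) = \frac{(1+\delta)^2}{1-2\delta-2\delta^2-\delta^3},$$
and denote by $\delta^{\ast}\in(0,1)$ the smallest positive root of the denominator. On the interval $[0,\delta^{\ast})$ the function $\theta$ is continuous and strictly positive, with $\theta(0)=1$ and $\theta(\delta)\to +\infty$ as $\delta\to\delta^{\ast-}$. By the intermediate value theorem, for every $\epsilon>0$ the equation $\theta(\delta)=1+\epsilon$ has at least one solution in $(0,\delta^{\ast})$.

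The substantive step is to verify that any such solution lies strictly below $\epsilon/(1+\epsilon)$, and for this I would establish the pointwise estimate
$$\theta(\delta) \;\ge\; 1+4\delta \qquad \text{for all } \delta\in[0,\delta^{\ast}).$$
Since the denominator of $\theta$ is positive on this range, this is equivalent to $(1+\delta)^2 \ge (1+4\delta)(1-2\delta-2\delta^2-\delta^3)$, and a routine expansion reduces it to $11\delta^2+9\delta^3+4\delta^4\ge 0$, which is obvious. Granted this bound, the equation $\theta(\delta)=1+\epsilon$ forces $\delta\le \epsilon/4$. Because $\epsilon/4<\epsilon/(1+\epsilon)$ precisely when $\epsilon<3$, any choice $\epsilon_0\in(0,3]$ yields the desired $\delta(\epsilon)$.

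The only mild subtlety is the choice of the linear lower bound $1+4\delta$: the slope $4$ equals $\theta'(0)$ and is exactly what renders the polynomial inequality above trivially true, while being strictly larger than $1$, which is what guarantees $\epsilon/4<\epsilon/(1+\epsilon)$ for small $\epsilon$. I do not anticipate any real obstacle; the statement is essentially a calculus exercise about one concrete rational function.
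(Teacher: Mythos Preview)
Your argument is correct and takes a genuinely different route from the paper. After the same algebraic simplification of $\theta$, the paper substitutes $z=1+\delta$ and $\lambda=\epsilon/(1+\epsilon)$ to rewrite $\theta(\delta)=1+\epsilon$ as the cubic $g(z)=z^3-\lambda z^2+z-2=0$, then invokes Rouch\'e's theorem to compare $g$ with $f(z)=z^3+z-2=(z-1)(z^2+z+2)$ on the circle $\partial D(1,\lambda)$; this localises a (necessarily real) root $z_0$ in $(1,1+\lambda)$, giving $\delta=z_0-1\in(0,\lambda)$. Your approach stays entirely on the real line: the intermediate value theorem furnishes a solution in $(0,\delta^{\ast})$, and the tangent-line bound $\theta(\delta)\ge 1+4\delta$ (which you correctly reduce to $11\delta^2+9\delta^3+4\delta^4\ge 0$) pins that solution below $\epsilon/4<\epsilon/(1+\epsilon)$ for $\epsilon<3$. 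This is more elementary---no complex analysis---and even yields the sharper estimate $\delta\le\epsilon/4$ together with an explicit $\epsilon_0=3$, whereas the paper's Rouch\'e argument leaves $\epsilon_0$ implicit. The paper's method, on the other hand, explains structurally \emph{why} the root sits near $z=1$: it is a perturbation of the unique real root of $f$. Either proof suffices for the sole downstream use of the lemma, namely that $\delta(\epsilon)\to 0$ as $\epsilon\to 0$.
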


\begin{proof}
	Putting $z=1+\delta$ and $\lambda\equiv \lambda(\epsilon)= \frac{\epsilon}{1+\epsilon}$ we can transform the equation $\theta(\delta)=1+\epsilon$ into $g(z)=0$, where $g(z)=z^3-\lambda z^2+z-2$. Our goal is to show that, for every $\epsilon$ small enough, $g$ has a real root $z_0\equiv z_0(\epsilon)$ such that $1<z_0<1+\lambda$. To do so, we will make use of Rouch\'e's Theorem (cf. \cite[Section VIII.2]{Gamelin}) to compare the polynomial $g(z)$ with $f(z)=z^3+z-2= (z-1)(z^2+z+2)$, which has a simple root when $z=1$ and two complex-conjugate roots in the half-plane $\Re z<0$. \\
	
	Let us consider the auxiliary function $$\phi_{\lambda}(r)=-r^3 -(3+\lambda)r^2 +(4-2\lambda)r-\lambda.$$ 
	Given $r\in (0,1)$, if $\phi_{\lambda}(r)>0$, then $\abs{g(z)-f(z)}<\abs{f(z)}$ for all $z\in \partial D(1,r)$, where $D(1,r)$ is the open disc of radius $r$ centered at 1, and $\partial D(1,r)$ is its boundary. Indeed, $\phi_{\lambda}(r)>0$ if and only if $$\lambda (1+r)^2<r(4-3r-r^2).$$ In this situation,
    \begin{align*}
        \abs{g(z)-f(z)}& =\lambda\abs{z}^2\leq \lambda (1+r)^2<r(4-3r-r^2)\\
        & \leq \abs{z-1}\abs[0]{z^2+z+2}=\abs{f(z)}
    \end{align*}
	for all $z\in \partial D(1,r)$, where the last inequality is obtained as follows: since $\Re z\geq 1-r>0$ for every $z\in \partial D(1,r)$, we have that
	\begin{eqnarray*}
		\abs[0]{z^2+z+2}&=& \abs[0]{(z-1)^2+3z+1}\geq \abs{3z+1}-\abs{z-1}^2\\&\geq&\abs{\Re(3z+1)}-r^2\geq 4-3r-r^2.
	\end{eqnarray*}
    Now, evaluating at $r=\lambda$ we get that $\phi_{\lambda}(\lambda)=3\lambda-5\lambda^2-2\lambda^3$, which is positive whenever $\lambda$ is positive and small enough. Let $\epsilon_0\in (0,1)$ be such that $\phi_{\lambda} (\lambda)>0$ for every $\lambda\in (0,\epsilon_0)$. Then, for any $\epsilon\in (0,\epsilon_0)$ the radius $r= \lambda(\epsilon)\in (0,\epsilon_0)$ satisfies $\phi_{\lambda}(r)>0$, so $\abs{g(z)-f(z)}<\abs{f(z)}$ for every $z\in \partial D(1,r)$. Since $f$ has only a simple root in $D(1,r)$, Rouch\'e's Theorem allows us to conclude that $g$ has a single root in $D(1,r)$ as well, denoted $z_0$.\\
	
	Such $z_0$ must be real, because if $z_0\in \C\setminus \R$, then its conjugate $\bar{z}_0$ would be a different root of $g$ in $D(1,r)$, which leads to a contradiction. Thus, $z_0\in (1-r,1+r)$. In addition, $z_0>1$: if $z_0\leq 1$, then we would have $0=g(z_0)=z_0^3-\lambda z_0^2+z_0-2\leq -\lambda z_0^2<0$, which is also a contradiction. Taking $\delta(\epsilon)=z_0-1>0$ we conclude the result.
\end{proof}

In particular, from \Cref{lemm: technical delta} it follows that in the proof of \cite[Theorem 6.3]{FHHMZBanachSpaceTheory} $\delta$ can always be chosen so that $\delta\rightarrow 0$ whenever $\epsilon \rightarrow 0$. This will be necessary for the results in the next section.

\section{$\fbp[E^{**}]$ embeds in $\fbp[E]^{**}$}\label{sec:bidual embedding}

As is customary, for a set $A\subset X$ in a Banach lattice, $\lat(A)$ denotes the smallest (not necessarily closed) sublattice of $X$ containing $A$. It is standard to check that $\lat(A)$ can be described as the set of expressions that use lattice and linear operations involving finitely many elements of $A$. By \cite[Exercise 8, p. 204]{AliprantisBurkinshaw}, every $f\in \lat(A)$ can be written as $$f=\bigvee_{j=1}^r \sum_{i\in A_j} \alpha^j_i a^j_i- \bigvee_{j=1}^r \sum_{i\in B_j} \beta^j_i b^j_i$$ for some $r\in\mathbb N$, finite sets $A_j,B_j\subset \mathbb N$, scalars $\alpha^j_i,\beta^j_i$ and $a^j_i,b^j_i\in A$. Let us begin with the following.

\begin{prop}\label{prop: lambda E* restricted to FBL is embedding}
    Let $1\leq p \leq \infty$. The restriction of the operator $C_{J_E}$ to $\fbp[E^*]$ is an isometric lattice embedding.
\end{prop}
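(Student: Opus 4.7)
The plan is to show $\|C_{J_E}f\|_{\mathcal{H}_p(E,\mathbb R)} = \|f\|_{\mathcal{H}_p(E^{**},\mathbb R)}$ for every $f\in \fbp[E^*]$; the non-trivial direction is $\ge$, since the other is immediate from $\|C_{J_E}\|\le 1$. Combined with the fact that $C_{J_E}$ is already a lattice homomorphism, this will yield the desired isometric lattice embedding.

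Fix $f\in \fbp[E^*]$ and $\eps>0$. Since $\fbp[E^*]$ is, by definition, the closed sublattice of $\mathcal{H}_p(E^{**},\mathbb R)$ generated by $\{\delta_{x^*}:x^*\in E^*\}$, I can approximate $f$ within $\eps$ by some $f_0=P(\delta_{x_1^*},\ldots,\delta_{x_k^*})$, where $P$ is a lattice polynomial in $k$ variables and $x_1^*,\ldots,x_k^*\in E^*$. Thus $f_0(x^{**})=P(x^{**}(x_1^*),\ldots,x^{**}(x_k^*))$ depends on $x^{**}$ only through the values $x^{**}(x_j^*)$. I also pick a sequence $(x_i^{**})_{i=1}^m\subset E^{**}$ with $\sup_{x^*\in B_{E^*}}(\sum_i |x_i^{**}(x^*)|^p)^{1/p}\le 1$ and $(\sum_i|f_0(x_i^{**})|^p)^{1/p}\ge \|f_0\|_{\mathcal{H}_p(E^{**},\mathbb R)}-\eps$.

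Now I apply the principle of local reflexivity (Theorem \ref{theo: principle of local reflexivity}) to $U=\spn\{x_1^{**},\ldots,x_m^{**}\}$ and $V=\spn\{x_1^*,\ldots,x_k^*\}$, producing $S:U\to E$ with $\|S\|\|S^{-1}\|\le 1+\eps'$ and $x_j^*(Sx^{**})=x^{**}(x_j^*)$ for every $j$ and $x^{**}\in U$. Because $f_0$ only sees the values on $x_1^*,\ldots,x_k^*$, this rigidity gives the key identity
\begin{equation*}
	f_0(x_i^{**})=P\big(x_1^*(Sx_i^{**}),\ldots,x_k^*(Sx_i^{**})\big)=f_0(J_E Sx_i^{**})=(C_{J_E}f_0)(Sx_i^{**}).
\end{equation*}
Therefore $(\sum_i |f_0(x_i^{**})|^p)^{1/p}\le \|C_{J_E}f_0\|_{\mathcal{H}_p(E,\mathbb R)}\cdot \sup_{x^*\in B_{E^*}}(\sum_i |x^*(Sx_i^{**})|^p)^{1/p}$, and it remains to control the last factor.

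The main technical step is to show that the weak-$p$-summing norm of $(Sx_i^{**})_{i=1}^m$ in $E$ is at most $\|S\|$ times that of $(x_i^{**})_i$ in $E^{**}$. Fixing $x^*\in B_{E^*}$, the functional $x^*\circ S$ lives in $U^*$ with norm $\le \|S\|$ and extends by Hahn--Banach to some $\tilde{x}^*\in B_{\|S\|\cdot E^{***}}$, so that $\sum_i |x^*(Sx_i^{**})|^p=\sum_i |\tilde{x}^*(x_i^{**})|^p$. Taking the supremum, I am left to compare $\sup_{\xi\in B_{E^{***}}}(\sum_i |\xi(x_i^{**})|^p)^{1/p}$ with $\sup_{y^*\in B_{E^*}}(\sum_i |x_i^{**}(y^*)|^p)^{1/p}$. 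These are equal because, by Goldstine, $J_{E^*}(B_{E^*})$ is weak-$*$-dense in $B_{E^{***}}$, and the map $\xi\mapsto \sum_i |\xi(x_i^{**})|^p$ is weak-$*$-continuous on $E^{***}$ (each $x_i^{**}$ is weak-$*$-continuous as an element of $(E^*)^*$). This gives the bound $\sup_{x^*\in B_{E^*}}(\sum_i |x^*(Sx_i^{**})|^p)^{1/p}\le \|S\|$. Combining everything and using Lemma \ref{lemm: technical delta} to ensure $\|S\|\to 1$ as $\eps'\to 0$, I arrive at $\|f_0\|_{\mathcal{H}_p(E^{**},\mathbb R)}-\eps\le (1+\eps')\|C_{J_E}f_0\|_{\mathcal{H}_p(E,\mathbb R)}$. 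Letting $\eps',\eps\to 0$ (and using $\|f-f_0\|\le \eps$ to pass back to $f$) yields $\|f\|\le \|C_{J_E}f\|$, completing the proof. The hardest part is the Goldstine step reconciling the sup over $B_{E^{***}}$ with the sup over $B_{E^*}$, together with extracting the quantitative control $\|S\|\approx 1$ from local reflexivity.
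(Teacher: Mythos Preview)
Your proof is correct and follows essentially the same route as the paper: reduce to a lattice polynomial in finitely many $\delta_{x_j^*}$, apply local reflexivity with $U=\spn\{x_i^{**}\}$ and $V=\spn\{x_j^*\}$ to identify $f_0(x_i^{**})=(C_{J_E}f_0)(Sx_i^{**})$, bound the weak-$p$-summing norm of $(Sx_i^{**})$ by $\|S\|$, and use Lemma~\ref{lemm: technical delta} to force $\|S\|\to 1$. The only cosmetic differences are that the paper works directly in $FVL[E^*]$ and extends by density at the end (rather than approximating a given $f$ first), and that the paper obtains the equality $\sup_{x^{***}\in B_{E^{***}}}(\sum_i|x^{***}(x_i^{**})|^p)^{1/p}=\sup_{x^*\in B_{E^*}}(\sum_i|x_i^{**}(x^*)|^p)^{1/p}$ in one line via $S^*$, whereas you spell it out with Hahn--Banach and Goldstine; both arguments are equivalent.
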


\begin{proof}
    Let $1\leq p <\infty$. First, we will show that $\norm[0]{h}_{\fbp[E^*]}=\|C_{J_E}h\|_{\mathcal{H}_p(E,\mathbb R)}$ for every $h\in \lat(\phi_{E^*}(E^*))$. The inequality $\norm[0]{h}_{\fbp[E^*]}\leq\|C_{J_E}h\|_{\mathcal{H}_p(E,\mathbb R)}$ follows from $\|C_{J_E}\|\leq 1$. In order to obtain the converse inequality, we apply \Cref{theo: principle of local reflexivity} to the space $X=E$. On the one hand, given $(x_i^{**})_{i=1}^m \subset E^{**}$, let us consider the finite-dimensional subspace $U=\spn(\cbr[0]{x_1^{**},\ldots,x_m^{**}})\subset E^{**}$. On the other hand, we can express a fixed $h\in \lat(\phi_{E^*}(E^*))$ as
	\begin{equation*}
		h=\bigvee_{j=1}^r \phi_{E^*} x_j^*- \bigvee_{k=1}^r \phi_{E^*} y_k^*
	\end{equation*}
	for some $(x_j^*)_{j=1}^r, (y_k^*)_{k=1}^r \subset E^*$. Let $V=\spn(\cbr{x_1^*,y_1^*,\ldots,x_r^*,y_r^*})\subset E^*$. Under these circumstances, \Cref{theo: principle of local reflexivity} yields that for every $\epsilon >0$ there exists an isomorphism $S:U\rightarrow S(U)\subset E$ such that $\norm{S}\norm[0]{S^{-1}}\leq 1+\epsilon$ and $x^*(Sx^{**})=x^{**}(x^*)$ for every $x^*\in V$ and $x^{**}\in U$. From this last property it follows that
    \begin{align*}
        C_{J_E}h(Sx_i^{**}) & = \bigvee_{j=1}^r x_j^* (Sx_i^{**})- \bigvee_{k=1}^r y_k^*(Sx_i^{**})\\
        & = \bigvee_{j=1}^r x_i^{**} (x_j^*)- \bigvee_{k=1}^r x_i^{**}(y^*_k) = h(x_i^{**}).
    \end{align*}
	Moreover,
	\begin{align*}
		\sup_{x^*\in B_{E^*}} \intoo[3]{\sum_{i=1}^m \abs[0]{x^*(Sx_i^{**})}^p}^{\frac{1}{p}}& = \sup_{x^*\in B_{E^*}} \intoo[3]{\sum_{i=1}^m \abs[0]{S^*x^*(x_i^{**})}^p}^{\frac{1}{p}} \\
		& \leq \norm{S^*} \sup_{x^{***}\in B_{E^{***}}} \intoo[3]{\sum_{i=1}^m \abs[0]{x^{***}(x_i^{**})}^p}^{\frac{1}{p}}\\ 
		& =  \norm{S} \sup_{x^*\in B_{E^*}} \intoo[3]{\sum_{i=1}^m \abs[0]{x_i^{**}(x^*)}^p}^{\frac{1}{p}}.
	\end{align*}
	Thus, if we choose $(x_i^{**})_{i=1}^m \subset E^{**}$ satisfying
	\begin{equation*}
	    \sup_{x^*\in B_{E^*}} \intoo[3]{\sum_{i=1}^m \abs[0]{x_i^{**}(x^*)}^p}^{\frac{1}{p}}\leq 1,
	\end{equation*}
	it follows that
	\begin{equation*}
	    \intoo[3]{\sum_{i=1}^m \abs[0]{h(x_i^{**})}^p}^{\frac{1}{p}}  = \norm{S} \intoo[3]{\sum_{i=1}^m \abs[2]{C_{J_E}h\intoo[2]{\frac{Sx_i^{**}}{\norm{S}}}  }^p}^{\frac{1}{p}} \leq \frac{1+\delta}{1-\delta} \|C_{J_E}h\|_{\mathcal{H}_p(E,\mathbb R)}
	\end{equation*}
    for every $\epsilon>0$ small enough, where $\delta$ has been chosen according to \Cref{lemm: technical delta}. Hence, first letting $\epsilon\rightarrow 0$, and then taking the supremum over all the possible choices of functionals $(x_i^{**})_{i=1}^m \subset E^{**}$, $m\in \N$, satisfying the $p$-summability condition above, we conclude that $\norm[0]{h}_{\fbp[E^*]}=\|C_{J_E}h\|_{\mathcal{H}_p(E,\mathbb R)}$ for every $h\in \lat(\phi_{E^*}(E^*))$. The result now follows from the fact that $\lat(\phi_{E^*}(E^*))$ is dense in $\fbp[E^*]$.\\
    
    The case $p=\infty$ is straightforward, since every function $h\in \FBLi[E^{*}]$ is weak* continuous in $B_{E^{**}}$ \cite[Theorem 5.3]{JLTTT} and, by Goldstine's Theorem (cf. \cite[Theorem 3.96]{FHHMZBanachSpaceTheory}), $J_E(B_E)$ is weak* dense in $B_{E^{**}}$.
\end{proof}

\begin{rem}\label{rema: isometric copy of FBLpE* in HpE*}
    Note that $R=\kappa_{E}\circ \eval[0]{C_{J_E}}_{\fbp[E^*]}$ is an isometric lattice embedding of $\fbp[E^*]$ into $H_p[E^*]=\mathcal{H}_p(E^{**},\mathbb R)$. When $E$ is not reflexive, the image of this embedding is different from the canonical copy of $\fbp[E^*]$. Indeed, by construction every element of $R(\fbp[E^*])$ vanishes outside of $J_E(E)$. Nevertheless, given $x^{**}\in E^{**}\setminus J_E(E)$, there is always a functional $x^*\in E^*$ such that $\phi_{E^*}x^*(x^{**})= x^{**}(x^*)\neq 0$, so $\phi_{E^*}x^*\in \fbp[E^*]$ but $\phi_{E^*}x^*\notin R(\fbp[E^*])$ and hence
    \begin{equation*}
        \fbp[E^*]\neq R(\fbp[E^*]).
    \end{equation*}
\end{rem}

\medskip

As a consequence of \Cref{prop: lambda E* restricted to FBL is embedding} we will show that $\fbp[E^{**}]$ embeds canonically into $\fbp[E]^{**}$ for any $1\leq p\leq \infty$. Before the proof of this assertion, we need some observations. First, note that \cite[Corollary 2.7]{ART} also holds for $\fbp[E]$, $1\leq p\leq \infty$, i.e. the lattice homomorphisms from $\fbp[E]$ to $\R$ (or, equivalently, the atoms of $\fbp[E]^*$) are precisely the extensions $\widehat{x^*}$ for $x^*\in E^*$.\\

Now, consider the embedding $\phi_E:E\rightarrow \fbp[E]$ and its second adjoint $\phi_E^{**}: E^{**}\rightarrow \fbp[E]^{**}$. Since $\fbp[E]^{**}$ is a $p$-convex Banach lattice with $p$-convexity constant 1 \cite[Proposition 1.d.4]{LindenstraussTzafririVol2}, it follows that $\widehat{\phi_E^{**}}:\fbp[E^{**}]\rightarrow \fbp[E]^{**} $ is a lattice homorphism with $\norm[0]{\widehat{\phi_E^{**}}}=1$. This lattice homomorphism satisfies the following property.

\begin{lem}\label{lemm: equality hat phi**E evaluated in hat x*}
	Let $1\leq p\leq \infty$. For every $h\in \fbp[E^{**}]$ and $x^*\in E^*$
	\begin{equation*}
		\widehat{\phi_E^{**}}h(\widehat{x^*})=C_{J_{E^{*}}}h (x^*).
	\end{equation*} 
\end{lem}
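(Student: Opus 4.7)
The plan is to exhibit both sides of the identity as lattice homomorphisms $\fbp[E^{**}]\to\R$ that agree on the generating set $\phi_{E^{**}}(E^{**})$, and then to conclude by uniqueness in the universal property of the free $p$-convex Banach lattice.

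First, since $\fbp[E^{**}]\subset \mathcal{H}_p(E^{***},\R)$ is equipped with the pointwise order, evaluation at any fixed point of $E^{***}$ is a lattice homomorphism $\fbp[E^{**}]\to\R$; taking the point $J_{E^*}x^*$ identifies the right-hand side as such a homomorphism, since by definition $C_{J_{E^*}}h(x^*)=h(J_{E^*}x^*)$. For the left-hand side, I would write $h\mapsto \widehat{\phi_E^{**}}h(\widehat{x^*})$ as the composition of $\widehat{\phi_E^{**}}$ — a lattice homomorphism by construction — with the evaluation $\xi\mapsto \xi(\widehat{x^*})$ on $\fbp[E]^{**}$. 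This latter map is precisely the biadjoint $(\widehat{x^*})^{**}$, and since $\widehat{x^*}:\fbp[E]\to\R$ is itself a lattice homomorphism (it is the atom of $\fbp[E]^*$ associated with $x^*\in E^*$, as recalled before the statement), the classical fact that the biadjoint of a lattice homomorphism between Banach lattices is again a lattice homomorphism — equivalently, that the adjoint of a lattice homomorphism is interval preserving and the adjoint of an interval preserving operator is a lattice homomorphism — delivers what is needed.

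To check agreement on the generators, I would take $x^{**}\in E^{**}$ and compute both sides at $\phi_{E^{**}}(x^{**})$. The right-hand side is
\[
\phi_{E^{**}}(x^{**})(J_{E^*}x^*)=J_{E^*}(x^*)(x^{**})=x^{**}(x^*),
\]
while the left-hand side, using $\widehat{\phi_E^{**}}\circ\phi_{E^{**}}=\phi_E^{**}$ together with $\phi_E^*\widehat{x^*}=\widehat{x^*}\circ\phi_E=x^*$, gives
\[
\phi_E^{**}(x^{**})(\widehat{x^*})=x^{**}(\phi_E^*\widehat{x^*})=x^{**}(x^*).
\]
Both sides therefore coincide with the unique lattice homomorphism $\widehat{T}:\fbp[E^{**}]\to\R$ extending the bounded linear functional $T\colon x^{**}\mapsto x^{**}(x^*)$ on $E^{**}$, and the identity in the statement follows.

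The step I expect to be the main obstacle is the lattice-homomorphism claim for $(\widehat{x^*})^{**}$ on $\fbp[E]^{**}$. Should a bare citation to the general theory of adjoints of positive operators feel too heavy in context, my fallback is to test the identity directly on elements of the form $\bigvee_j \phi_{E^{**}}(x^{**}_j)-\bigvee_k \phi_{E^{**}}(y^{**}_k)$ — which are dense in $\fbp[E^{**}]$ by the Aliprantis--Burkinshaw representation already employed in \Cref{prop: lambda E* restricted to FBL is embedding} — using that $\widehat{\phi_E^{**}}$ commutes with finite lattice operations and linear combinations, and then pass to the limit by continuity of both sides in $h$.
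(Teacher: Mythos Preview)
Your proof is correct and essentially identical to the paper's: the paper writes the evaluation functional as $\psi_{x^*}=J_{\fbp[E]^*}\widehat{x^*}\in\fbp[E]^{***}$, notes it is a lattice homomorphism (this is exactly your $(\widehat{x^*})^{**}$), composes with $\widehat{\phi_E^{**}}$, checks agreement with $J_{E^*}x^*$ on generators via the same computation you give, and concludes by uniqueness that $\psi_{x^*}\circ\widehat{\phi_E^{**}}=\widehat{J_{E^*}x^*}$. Your explicit appeal to the biadjoint-of-a-lattice-homomorphism fact is a welcome clarification of a step the paper leaves implicit, and the fallback argument is unnecessary.
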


\begin{proof}
	Given $x^*\in E^*$, the functional $\psi_{x^*}=J_{\fbp[E]^*} \widehat{x^*}\in \fbp[E]^{***}$ is a lattice homomorphism.
	Hence, $\psi_{x^*}\circ \widehat{\phi_E^{**}}$ is a lattice homomorphism between $\fbp[E^{**}]$ and $\R$ which for $x^{**}\in E^{**}$ satisfies
	\begin{align*}
		\psi_{x^*}\circ \widehat{\phi_E^{**}} \circ\phi_{E^{**}}(x^{**})&= \psi_{x^*}\circ \phi_E^{**}(x^{**}) =\phi_E^{**}x^{**}(\widehat{x^*}) \\
		&= x^{**}(\phi_E^*\widehat{x^*}) = x^{**} (x^*)= J_{E^*}x^*(x^{**}).
	\end{align*}
    Hence, $\psi_{x^*}\circ \widehat{\phi_E^{**}}  \circ \phi_{E^{**}} = J_{E^*}x^*$. The uniqueness of extension allows us to conclude that $\psi_{x^*}\circ \widehat{\phi_E^{**}} = \widehat{J_{E^*}x^*}$. Thus, for every $h\in \fbp[E^{**}]$ we have
	\begin{equation*}
		\widehat{\phi_E^{**}} h(\widehat{x^*}) = \psi_{x^*} \circ \widehat{\phi_E^{**}}(h) =\widehat{J_{E^*}x^*}(h)= h(J_{E^*}x^*)=C_{J_{E^{*}}}h(x^*),
	\end{equation*}
	as claimed. 
\end{proof}

This property, together with \Cref{prop: lambda E* restricted to FBL is embedding}, allows us to show the next result.

\begin{thm}\label{theo: FBL of bidual embeds into bidual of FBL}
	Let $E$ be a Banach space and $1\leq p\leq \infty$. The canonical extension $\widehat{\phi_E^{**}}:\fbp[E^{**}]\rightarrow \fbp[E]^{**}$ is an isometric lattice embedding.
\end{thm}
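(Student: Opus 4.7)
The operator $\widehat{\phi_E^{**}}$ has already been shown to be a lattice homomorphism with $\|\widehat{\phi_E^{**}}\| = 1$, so the only remaining content is the lower bound $\|\widehat{\phi_E^{**}}h\|_{\fbp[E]^{**}} \ge \|h\|_{\fbp[E^{**}]}$ for every $h \in \fbp[E^{**}]$. My plan combines \Cref{prop: lambda E* restricted to FBL is embedding} applied to $E^*$ in place of $E$, which yields
$$\|h\|_{\fbp[E^{**}]} = \|C_{J_{E^*}}h\|_{\mathcal{H}_p(E^*, \mathbb R)},$$
with \Cref{lemm: equality hat phi**E evaluated in hat x*}, which identifies the pointwise values $C_{J_{E^*}}h(x^*) = \widehat{\phi_E^{**}}h(\widehat{x^*})$ for every $x^* \in E^*$.

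Focusing on $1 \le p < \infty$, I first unfold $\|C_{J_{E^*}}h\|_{\mathcal{H}_p(E^*, \mathbb R)}$ as the supremum of expressions of the form $(\sum_{i=1}^m |C_{J_{E^*}}h(x_i^*)|^p)^{1/p}$ over finite families $(x_i^*)_{i=1}^m \subset E^*$ satisfying the admissibility condition $\sup_{x^{**} \in B_{E^{**}}}(\sum_i |x^{**}(x_i^*)|^p)^{1/p} \le 1$. Because the map $x^{**} \mapsto (x^{**}(x_i^*))_{i=1}^m$ is weak-$*$ continuous, Goldstine's theorem lets me rewrite this admissibility condition equivalently as $\sup_{x \in B_E}(\sum_i |x_i^*(x)|^p)^{1/p} \le 1$, which is precisely the condition appearing in the definition of $\|\cdot\|_{\fbp[E]}$.

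The key step is then to produce, for each such admissible family $(x_i^*)_{i=1}^m$ and each $c=(c_i)_{i=1}^m \in B_{\ell^{p'}}$, a witness $\Phi_c := \sum_{i=1}^m c_i \widehat{x_i^*} \in B_{\fbp[E]^*}$. The verification is a one-line H\"older estimate:
$$|\Phi_c(f)| = \Bigl|\sum_i c_i f(x_i^*)\Bigr| \le \|c\|_{\ell^{p'}}\Bigl(\sum_i |f(x_i^*)|^p\Bigr)^{1/p} \le \|c\|_{\ell^{p'}}\|f\|_{\fbp[E]}$$
for every $f \in \fbp[E]$, where the last inequality uses the admissibility of $(x_i^*)$ together with the explicit formula for $\|f\|_{\fbp[E]}$. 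Evaluating $\widehat{\phi_E^{**}}h$ at $\Phi_c$ produces $\sum_i c_i C_{J_{E^*}}h(x_i^*)$; taking the supremum first over $c \in B_{\ell^{p'}}$ recovers $(\sum_i |C_{J_{E^*}}h(x_i^*)|^p)^{1/p}$ as a lower bound for $\|\widehat{\phi_E^{**}}h\|_{\fbp[E]^{**}}$, and taking supremum over admissible $(x_i^*)$ closes the argument. The case $p=\infty$ is even simpler: for each $x^* \in B_{E^*}$ the single functional $\widehat{x^*}$ lies in $B_{\fbp[E]^*}$, so taking supremum over $x^* \in B_{E^*}$ directly recovers $\|C_{J_{E^*}}h\|_{\mathcal{H}_\infty(E^*, \mathbb R)} = \|h\|_{\FBLi[E^{**}]}$.

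I do not anticipate any serious obstacle once \Cref{prop: lambda E* restricted to FBL is embedding} and \Cref{lemm: equality hat phi**E evaluated in hat x*} are available; the argument is essentially bookkeeping. The only mildly delicate point is the Goldstine swap used to translate the admissibility condition appearing in $\mathcal{H}_p(E^*, \mathbb R)$ (a supremum over $B_{E^{**}}$) into the admissibility condition defining $\|\cdot\|_{\fbp[E]}$ (a supremum over $B_E$), which is what ultimately makes the witnesses $\Phi_c$ live inside $B_{\fbp[E]^*}$.
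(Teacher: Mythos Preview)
Your proposal is correct and follows essentially the same route as the paper: both invoke \Cref{prop: lambda E* restricted to FBL is embedding} (applied with $E^*$ in place of $E$) together with \Cref{lemm: equality hat phi**E evaluated in hat x*}, and then control $(\sum_i |C_{J_{E^*}}h(x_i^*)|^p)^{1/p}$ by testing $\widehat{\phi_E^{**}}h$ against a functional $\sum_i c_i\,\widehat{x_i^*}\in B_{\fbp[E]^*}$, the paper simply fixing the H\"older-optimal coefficients $c_i=\theta_i$ instead of taking the supremum over $B_{\ell^{p'}}$. Your Goldstine remark is also already absorbed in the paper via the identification $H_p[E]=\mathcal H_p(E^*,\mathbb R)$ stated after \eqref{eq: norm of HpEF}.
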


\begin{proof}
	We begin by proving the case $p<\infty$. First, recall that $\widehat{\phi_E^{**}}$ has norm one, so for every $h\in \fbp[E^{**}]$ we have
	\begin{equation*}
		\norm[0]{\widehat{\phi_E^{**}}h}\leq  \norm[0]{h}_{\fbp[E^{**}]}.
	\end{equation*}
	To obtain the reverse inequality, let us fix a function $h\in \fbp[E^{**}]$ and $(x_i^*)_{i=1}^m\subset E^*$ such that
	\begin{equation*}
	    \sup_{x\in B_{E}} \intoo[3]{\sum_{i=1}^m \abs[0]{x_i^{*}(x)}^p}^{\frac{1}{p}}\leq 1.
	\end{equation*}
	We claim that
	\begin{equation}\label{equa: auxiliar in FBL of bidual embeds into bidual of FBL}
		\intoo[3]{\sum_{i=1}^m \abs[0]{C_{J_{E^*}}h(x_i^*)}^p}^{\frac{1}{p}} \leq \norm[0]{\widehat{\phi_E^{**}}h}.
	\end{equation}
	Indeed, by \Cref{lemm: equality hat phi**E evaluated in hat x*} we have that
	\begin{align*}
		\intoo[3]{\sum_{i=1}^m \abs[0]{C_{J_{E^*}}h(x_i^*)}^p}^{\frac{1}{p}} & = \intoo[3]{\sum_{i=1}^m \abs[0]{\widehat{\phi_E^{**}}h(\widehat{x_i^*})}^p}^{\frac{1}{p}} = \sum_{i=1}^m \theta_i \widehat{\phi_E^{**}}h(\widehat{x_i^*}) \\
		&= \widehat{\phi_E^{**}}h \intoo[3]{\sum_{i=1}^m  \theta_i \widehat{x_i^*}} \leq \norm[0]{\widehat{\phi_E^{**}}h}\norm[3]{\sum_{i=1}^m  \theta_i \widehat{x_i^*}},
	\end{align*}
	 where the coefficients $\theta_i$ are defined as
	\begin{equation*}
		\theta_i= \text{sgn}(\widehat{\phi_E^{**}}h(\widehat{x_i^*})) \abs[0]{\widehat{\phi_E^{**}}h(\widehat{x_i^*})}^{p-1} \intoo[3]{\sum_{i=1}^m \abs[0]{\widehat{\phi_E^{**}}h(\widehat{x_i^*})}^p}^{\frac{1}{p}-1}.
	\end{equation*}
	In order to estimate the norm of $\sum_{i=1}^m  \theta_i \widehat{x_i^*}\in \fbp[E]^*$, let us evaluate it on $f\in \fbp[E]$ and then apply H\"older's inequality with exponents $p$ and $p^*=\frac{p}{p-1}$ as follows:
	\begin{align*}
        & \abs[3]{\sum_{i=1}^m  \theta_i \widehat{x_i^*}(f)}  \leq  \sum_{i=1}^m  \abs[0]{\theta_i f(x_i^*)}\\
        & = \intoo[3]{\sum_{i=1}^m \abs[0]{\widehat{\phi_E^{**}}h(\widehat{x_i^*})}^p}^{\frac{1}{p}-1} \sum_{i=1}^m  \abs[0]{\widehat{\phi_E^{**}}h(\widehat{x_i^*})}^{p-1} \abs[0]{ f(x_i^*)}\\
		& \leq  \intoo[3]{\sum_{i=1}^m \abs[0]{\widehat{\phi_E^{**}}h(\widehat{x_i^*})}^p}^{\frac{1}{p}-1}  \intoo[3]{\sum_{i=1}^m \abs[0]{\widehat{\phi_E^{**}}h(\widehat{x_i^*})}^{p^*(p-1)}}^{\frac{1}{p^*}}   \intoo[3]{\sum_{i=1}^m \abs[0]{f(x_i^*)}^p}^{\frac{1}{p}} \\
		& = \intoo[3]{\sum_{i=1}^m \abs[0]{f(x_i^*)}^p}^{\frac{1}{p}} \leq 	\|f\|_{\fbp[E]}.
	\end{align*}
	Therefore, we have that $\norm[0]{\sum_{i=1}^m  \theta_i \widehat{x_i^*}}\leq 1$, and inequality \eqref{equa: auxiliar in FBL of bidual embeds into bidual of FBL} follows. 
	
	By \Cref{prop: lambda E* restricted to FBL is embedding}, we conclude that 
	$$\norm{h}_{\fbp[E^{**}]}=\norm[0]{C_{J_{E^*}}h}_{H_{p}[E]}\leq \norm[0]{\widehat{\phi_E^{**}}h}$$ for every $h\in \fbp[E^{**}]$, so $\widehat{\phi_E^{**}}$ is an isometric lattice embedding.\\

    In order to prove the case $p=\infty$, just note that by \Cref{prop: lambda E* restricted to FBL is embedding} and \Cref{lemm: equality hat phi**E evaluated in hat x*} we have
    \begin{align*}
        \norm[0]{\widehat{\phi_E^{**}}h}& \leq  \norm[0]{h}_{\FBLi[E^{**}]} = \norm[0]{C_{J_{E^*}}h}_{H_{\infty}[E]} = \sup_{x^*\in B_{E^*}} \abs[0]{C_{J_{E^*}}h(x^*)} \\ 
        & = \sup_{x^*\in B_{E^*}} \abs[0]{\widehat{\phi_E^{**}}h(\widehat{x^*})} \leq \norm[0]{\widehat{\phi_E^{**}}h}.
    \end{align*}
\end{proof}

\begin{rem}
Note that the map $\widehat{\phi_E^{**}}$ considered in \Cref{theo: FBL of bidual embeds into bidual of FBL} is not surjective as long as $\text{dim}(E)>1$. Indeed, this can be seen as a consequence of the fact that $\fbp[E]$ is not Dedekind complete \cite[Proposition 2.11]{OTTT}.
\end{rem}

\begin{rem}
It can be checked that the canonical embedding $J_{\fbp[E]}:\fbp[E]\rightarrow \fbp[E]^{**}$ can be factored as $J_{\fbp[E]}=\widehat{\phi_E^{**}}\circ \overline{J_E} $. Indeed, for every $x\in E$ we have that
$$
\widehat{\phi_E^{**}}\circ \overline{J_E} \circ \phi_E(x)=\widehat{\phi_E^{**}}\circ  \phi_{E^{**}}\circ J_E(x)=\phi_E^{**}\circ J_E(x)=J_{\fbp[E]}\circ \phi_E(x).
$$
We have that $J_{\fbp[E]}$ and $\widehat{\phi_E^{**}}\circ \overline{J_E}$ are lattice homomorphisms which coincide on points of the form $\phi_E x$, and thus they coincide on all of $\fbp[E]$. This immediately yields that $\widehat{\phi_E^{**}}$ is an isometry on the image of $\overline{J_E}$. Hence, \Cref{theo: FBL of bidual embeds into bidual of FBL} can be considered as a strengthening of this simple fact.
\end{rem}

The following corollary is just a direct consequence of \Cref{theo: FBL of bidual embeds into bidual of FBL} and the Principle of Local Reflexivity.

\begin{cor}\label{coro: FBL of bidual finitely representable in FBL}
    Let $E$ be a Banach space and $1\leq p\leq \infty$. Then $\fbp[E^{**}]$ is finitely representable in $\fbp[E]$, i.e., for every finite dimensional subspace $X\subset \fbp[E^{**}]$ and every $\epsilon>0$ there is a linear isomorphism $T$ of $X$ onto $T(X)\subset \fbp[E]$ such that $\norm{T}\norm[0]{T^{-1}}\leq 1+\epsilon$.
\end{cor}

Moreover, \Cref{prop: lambda E* restricted to FBL is embedding} together with an argument for computing the norm locally from \cite{Oikhberg} allows us to prove a lattice version of this result. This answers a natural question raised by an anonymous referee.

\begin{thm}\label{theo: FBL of bidual lattice finitely representable in FBL}
    Let $E$ be a Banach space and $1\leq p\leq \infty$. Then $\fbp[E^{**}]$ is lattice finitely representable in $\fbp[E]$, i.e., for every finite dimensional sublattice $X\subset \fbp[E^{**}]$ and every $\epsilon>0$ there is a lattice isomorphism $T$ of $X$ onto $T(X)\subset \fbp[E]$ such that $\norm{T}\norm[0]{T^{-1}}\leq 1+\epsilon$.
\end{thm}

\begin{proof}
    Let $X\subset \fbp[E^{**}]$ be a finite dimensional sublattice. We can find $f_1,\ldots,f_N\in X$ positive, normalized and pairwise disjoint elements that span $X$.\\

    {\it Claim:} Given a Banach space $F$ and $f_1,\ldots,f_N\in \fbp[F]$ positive and pairwise disjoint vectors, for every $\delta>0$ there exist $g_1,\ldots,g_N \in \lat(\phi_{F}(F))$ positive and pairwise disjoint such that $\norm{f_n-g_n}_{\fbp[F]}<\delta$ for every $n=1,\ldots,N$.\\

    We will prove this claim by induction on $N$. The case $N=1$ is trivial, since $\lat(\phi_{F}(F))_+$ is dense in $\fbp[F]_+$. Now, assume the claim holds for $N\geq 1$ vectors, and let us show that it is also true for $f_1,\ldots,f_{N+1}\in \fbp[F]$ positive and pairwise disjoint vectors. Apply the induction hypothesis to find $h_1,\ldots, h_N,h_{N+1}\in \lat(\phi_{F}(F))_+$ such that $h_1,\ldots, h_N$ are pairwise disjoint and $\norm{f_n-h_n}_{\fbp[F]}<\frac{\delta}{2N+1}$ for every $n=1,\ldots,N+1$. Define $g_n=h_n-h_n\wedge h_{N+1}$ for $n=1,\ldots,N$ and $g_{N+1}=h_{N+1}-h\wedge h_{N+1}$, where $h=\sum_{n=1}^N h_n$. It is easy to check that the vectors $g_1,\ldots,g_{N+1}$ are positive and pairwise disjoint. Moreover, we have the following inequalities:
    \begin{align*}
        \abs{h_n\wedge h_{N+1}} & \leq \abs{f_n\wedge h_{N+1}-h_n\wedge h_{N+1}} + \abs{f_n\wedge f_{N+1}-f_n\wedge h_{N+1}}\\
        & \leq \abs{f_n-h_n} + \abs{f_{N+1}-h_{N+1}}, n=1,\ldots,N,\\
        \abs{f_n-g_n} & \leq \abs{f_n-h_n} + \abs{h_n\wedge h_{N+1}}\\
        & \leq 2\abs{f_n-h_n} + \abs{f_{N+1}-h_{N+1}}, n=1,\ldots,N,
    \end{align*}
    \begin{align*}
        \abs{h\wedge h_{N+1}} & \leq \sum_{n=1}^N \abs{h_n\wedge h_{N+1}} \leq \sum_{n=1}^N \abs{f_n-h_n} + N \abs{f_{N+1}-h_{N+1}},\\
        \abs{f_{N+1}-g_{N+1}} & \leq \abs{f_{N+1}-h_{N+1}} +\abs{h\wedge h_{N+1}} \\
        & \leq \sum_{n=1}^N \abs{f_n-h_n} + (N+1) \abs{f_{N+1}-h_{N+1}}.
    \end{align*}
    Taking norms we obtain the claim.\\

    Now, let us fix $\epsilon>0$ and consider $\delta=\delta(\epsilon)>0$ to be chosen later. We can apply the claim to obtain $g_1,\ldots,g_N \in \lat(\phi_{E^{**}}(E^{**}))$ positive and pairwise disjoint such that $\norm{f_n-g_n}_{\fbp[E^{**}]}<\delta$ for every $n=1,\ldots,N$. Denote by $Y$ the subspace generated by this elements, which is already a sublattice. We can define a surjective lattice isomorphism $R:X\rightarrow Y$ given by $$R\intoo[3]{\sum_{n=1}^N \lambda_n f_n}=\sum_{n=1}^N \lambda_n g_n.$$ 
    Choosing $\delta$ small enough we can make
    \begin{equation*}
        \norm{R}\norm[0]{R^{-1}}\leq 1+\varepsilon.
    \end{equation*}
    Thus, it suffices to show that $Y$ can be lattice embedded into $\fbp[E]$ with distortion arbitrarily close to 1. Since $g_1,\ldots,g_N \in  \lat(\phi_{E^{**}}(E^{**}))$, we can find a finite dimensional subspace $F\subset E^{**}$ such that any element of $Y$ can be expressed as a lattice-linear expression with elements from $F$. By \cite[Proposition 4.4 and Remark 4.7]{Oikhberg} we can find a finite dimensional subspace $G\subset E^{**}$ containing $F$ such that 
    \begin{equation*}
        \norm{h}_{\fbp[E^{**}]}\leq \norm{h}_{\fbp[G]} \leq (1+\delta) \norm{h}_{\fbp[E^{**}]}
    \end{equation*}
    for every $h\in Y$. Now, consider a $\delta$-net $(h_j)_{j=1}^J$ in the compact set $S_Y=Y\cap S_{\fbp[E^{**}]}$. We can use \Cref{prop: lambda E* restricted to FBL is embedding} to choose functionals $x^*_{j,1},\ldots, x^*_{j,K}\in E^*$ such that
    \begin{equation*}
	    \sup_{x\in B_{E}} \intoo[3]{\sum_{k=1}^{K} \abs[0]{x_{j,k}^{*}(x)}^p}^{\frac{1}{p}}\leq 1.
	\end{equation*}
    and
    \begin{equation*}
        1= \norm{h_j}_{\fbp[E^{**}]}\leq \intoo[3]{\sum_{k=1}^{K} \abs[0]{h_j\circ J_{E^*}(x_{j,k}^{*})}^p}^{\frac{1}{p}}+\delta
    \end{equation*}
    for every $j=1,\ldots, J$. Let $V$ be the finite dimensional subspace of $E^*$ generated by these $(x^*_{j,k})$ and apply \Cref{theo: principle of local reflexivity} with $G$ and $V$ to obtain a linear isomorphism $S:G\rightarrow E$ such that $\norm{S}\norm[0]{S^{-1}}\leq 1+\delta$ and $x^*(Sx^{**})=x^{**}(x^*)$ for every $x^*\in V$ and $x^{**}\in G$. Moreover, according to \Cref{lemm: technical delta} we can assume that 
    \begin{equation*}
        \norm{S}\leq \frac{1+\delta}{1-\delta}.
    \end{equation*}
    Let $\overline S: \fbp[G]\rightarrow \fbp[E]$ be the extension of $\widehat{\phi_E\circ S}$ to a lattice homomorphism. Then, we claim that $Q=\eval[1]{\overline S}_Y: (Y, \norm{\cdot}_{\fbp[E^{**}]})\rightarrow \fbp[E]$ is a lattice embedding with distortion close to $1$ for $\delta$ small enough. To check this, first note that $\norm[0]{\overline S}=\norm{S}$ and
    \begin{equation*}
        \norm[0]{\overline S h}_{\fbp[E]}\leq \norm[0]{\overline S} \norm{ h}_{\fbp[G]}\leq  (1+\delta) \norm{S} \norm{ h}_{\fbp[E^{**}]}
    \end{equation*}
    for every $h\in Y$, so $Q$ is bounded. On the other hand, given $h\in Y$ and $x^*\in V$ it is straightforward to check that $\overline S h(x^*)=h\circ J_{E^*}(x^*)$. In particular, for every $j=1,\ldots, J$ we have
    \begin{align*}
        \norm[0]{\overline S h_j}_{\fbp[E]}\geq \intoo[3]{\sum_{k=1}^{K} \abs[0]{\overline S h_j(x_{j,k}^{*})}^p}^{\frac{1}{p}} = \intoo[3]{\sum_{k=1}^{K} \abs[0]{h_j\circ J_{E^*}(x_{j,k}^{*})}^p}^{\frac{1}{p}}\geq 1-\delta.
    \end{align*}
    Since $(h_j)$ is a $\delta$-net on $S_Y$, for every $h\in S_Y$ there is a $j$ such that $\norm{h-h_j}_{\fbp[E^{**}]}\leq \delta$, and thus
    \begin{align*}
        \norm[0]{\overline S h}_{\fbp[E]} & \geq \norm[0]{\overline S h_j}_{\fbp[E]}-\norm[0]{\overline S (h-h_j)}_{\fbp[E]}\\
        & \geq 1-\delta -\delta (1+\delta)\norm{S}\geq 1-2\delta
    \end{align*}
    for $\delta$ small enough. We conclude that
    \begin{equation*}
        \norm{Q}\leq \frac{(1+\delta)^2}{1-\delta},\,\,\norm[0]{Q^{-1}}\leq \frac{1}{1-2\delta}\,\,\text{and}\,\, \norm{Q}\norm[0]{Q^{-1}}\leq \frac{(1+\delta)^2}{(1-\delta)(1-2\delta)},
    \end{equation*}
    so choosing $\delta$ sufficiently small we can obtain $T=Q\circ R$ from $X$ onto $T(X)\subset \fbp[E]$ with distortion below $1+\epsilon$.
\end{proof}

Let us now take a closer look at the case $p=\infty$. Recall that $\FBLi[E]$ coincides with the lattice of positively homogeneous weak* continuous functions over $B_{E^*}$, denoted by $\Ccal_{ph}(B_{E^*})$ \cite[Proposition 2.2]{OTTT}. Thus, \Cref{theo: FBL of bidual embeds into bidual of FBL} in particular states that $\Ccal_{ph}(B_{E^{***}})$ embeds isometrically into $\Ccal_{ph}(B_{E^*})^{**}$.\\

A similar statement holds if we consider the whole space of weak* continuous functions instead of working only with the positively homogeneous ones, since for any Banach space $E$ the Banach lattice $\Ccal(B_{E^*})$ behaves as a free object over $E$ in the subcategory of $\Ccal(K)$-spaces in the following sense (cf. \cite[Theorem 5.4]{JLTTT}):\\ 

\begin{thm}\label{theo: free C(K) over E}
    Let $E$ be a Banach space. For every compact Hausdorff space $K$ and every bounded linear operator $T:E\rightarrow \Ccal(K)$, there exists a unique lattice homomorphism $\widetilde{T}: \Ccal(B_{E^*})\rightarrow \Ccal(K)$ such that $\widetilde{T}\circ \delta_E=T$ and $\widetilde{T}\uno_{E^*}=\norm{T}\uno_K$, where $\delta_E: E\rightarrow \Ccal(B_{E^*})$ denotes the isometric embedding that sends each $x\in E$ into the evaluation function $\delta_Ex(x^*)=x^*(x)$ for $x^*\in B_{E^*}$, and $\uno_{E^*}$ and $\uno_K$ denote the constant functions over $B_{E^*}$ and $K$, respectively. Moreover, $\norm[0]{\widetilde{T}}=\norm{T}$.
\end{thm}

In particular, given a Banach space $E$ and its canonical embedding $\delta_E$, we can consider the operator $\delta_E^{**}: E^{**}\rightarrow \Ccal(B_{E^*})^{**}$, which is an isometric embedding too. Note that $\Ccal(B_{E^*})^{**}$ is an $AM$-space with order unit $J_{\Ccal(B_{E^*})}\uno_{E^*}$. Moreover, the norm in $\Ccal(B_{E^*})^{**}$ coincides with the order unit norm induced by $J_{\Ccal(B_{E^*})}\uno_{E^*}$, so by Kakutani's Theorem (cf. \cite[Theorem 2.1.3]{MeyerNieberg}) $\Ccal(B_{E^*})^{**}$ is isometrically lattice isomorphic to some $\Ccal(K)$-space. Consequently, \Cref{theo: free C(K) over E} states that there exists a unique lattice homomorphism $\widetilde{\delta_E^{**}}: \Ccal(B_{E^{***}})\rightarrow \Ccal(B_{E^*})^{**}$ such that $\widetilde{\delta_E^{**}}\circ \delta_{E^{**}}=\delta_E^{**}$ and $\widetilde{\delta_E^{**}}\uno_{E^{***}}=J_{\Ccal(B_{E^*})}\uno_{E^*}$, which in addition has norm one. As it was mentioned below, we have a statement similar to \Cref{theo: FBL of bidual embeds into bidual of FBL} for this operator.

\begin{thm}\label{theo: continuous on ball of tridual embeds into bidual of continuous on ball of dual}
    Let $E$ be a Banach space. Then the operator $\widetilde{\delta_E^{**}}: \Ccal(B_{E^{***}})\rightarrow \Ccal(B_{E^*})^{**}$ is an isometric lattice embedding.
\end{thm}

\begin{proof}
    On the one hand, we notice that the composition operator 
    \begin{equation*}
        \fullfunction{C_{J_{E^*}}}{ \Ccal(B_{E^{***}})}{\ell_{\infty}(B_{E^*})}{h}{C_{J_{E^*}}h=h\circ J_{E^*}}
    \end{equation*}
    is an isometric lattice embedding as a consequence of Goldstine's Theorem (cf. \cite[Theorem 3.96]{FHHMZBanachSpaceTheory}).\\

    On the other hand, we observe that if we denote by $\eta_{x^*}$ the evaluation functional on $x^*\in B_{E^*}$ defined as
    \begin{equation*}
        \fullfunction{\eta_{x^*}}{ \Ccal(B_{E^{*}})}{\R}{f}{\eta_{x^*}(f)=f(x^*),}
    \end{equation*}
    then for every $h\in L:= \lat (\delta_{E^{**}}(E^{**})\cup \cbr{\uno_{E^{***}}})\subset \Ccal(B_{E^{***}})$ and $x^*\in B_{E^*}$ the following identity holds:
    \begin{equation*}
        \widetilde{\delta_E^{**}} h (\eta_{x^*})= C_{J_{E^*}}h (x^*).
    \end{equation*}
    Indeed, the equation is trivially true if $h=\uno_{E^{***}}$, and it follows easily when $h=\delta_{E^{**}}x^{**}$ for some $x^{**}\in E^{**}$ since $\delta_E^* \eta_{x^*}=x^*$. Moreover, $\eta_{x^*}$ is a lattice homomorphism from $\Ccal(B_E^*)$ to $\R$ and thus, an atom in $\Ccal(B_E^*)^*$, so $J_{\Ccal(B_E^*)^*}\eta_{x^*}$ is also a lattice homomorphism. Applying \cite[Exercise 8, p. 204]{AliprantisBurkinshaw} in order to represent every $h\in L$ as a finite lattice-linear expression depending only on some $(\delta_{E^{**}}x_i^{**})_{i=1}^m$ and $\uno_{E^{***}}$, and recalling that $C_{J_{E^*}}$ is a lattice homomorphism, we obtain the identity.\\

    Now, for every $h\in L$ it follows that
    \begin{align*}
         \norm{h}_{\Ccal(B_{E^{***}})} & = \norm[0]{C_{J_{E^*}}h}_{\ell_{\infty}(B_{E^*})} = \sup_{x^*\in B_{E^*}} \abs[0]{C_{J_{E^*}}h(x^*)} = \sup_{x^*\in B_{E^*}} \abs[0]{\widetilde{\delta_E^{**}} h (\eta_{x^*})}  \\
        &\leq \norm[0]{\widetilde{\delta_E^{**}} h}_{\Ccal(B_{E^*})^{**}} \leq \norm[0]{\widetilde{\delta_E^{**}}} \norm{h}_{\Ccal(B_{E^{***}})} = \norm{h}_{\Ccal(B_{E^{***}})}.
    \end{align*}
    Since the sublattice $L$ is dense in $\Ccal(B_{E^{***}})$ by Stone--Weierstra{\ss}'s Theorem (cf. \cite[Theorem 2.1]{MeyerNieberg}), we conclude that $$\norm{h}_{\Ccal(B_{E^{***}})}= \norm[0]{\widetilde{\delta_E^{**}} h}_{\Ccal(B_{E^*})^{**}}$$ for every $h\in \Ccal(B_{E^{***}})$.
\end{proof}

\section{Free $p$-convex dual Banach lattices}\label{sec:free dual BL}

The present section will be devoted to studying the existence of free objects in the category of duals of Banach lattices (and the subcategories of $p$-convex ones). Given $1\leq p\leq \infty$, we denote by $\mathcal{BL}^{*(p)}$ the category whose objects are duals of Banach lattices which are $p$-convex, and whose morphisms are weak*-to-weak* continuous (equivalently, adjoint) lattice homomorphisms.\\

\begin{rem}\label{rem: order continuos predual when p>1}
    Observe that when $p>1$ every object $X^*$ of the category $\mathcal{BL}^{*(p)}$ is canonically lattice complemented in its bidual $X^{***}$. Consider the contractive adjoint operator $J_X^*:X^{***}\rightarrow X^*$, which is a lattice quotient. Indeed, since $X^*$ is $p$-convex, $X$ must be $p^*$-concave, where $p^*=\frac{p}{p-1}<\infty$ (cf. \cite[Proposition 1.d.4]{LindenstraussTzafririVol2}). The $p^*$-concavity of $X$ for finite $p^*$ implies that $X$ is order continuous, and thus it is an ideal in $X^{**}$ (cf. \cite[Theorems 4.9 and 4.14]{AliprantisBurkinshaw}). In other words, the canonical embedding $J_X$ is an interval preserving isometric embedding, so $J_X^*$ is a contractive weak*-to-weak* continuous lattice quotient (cf. \cite[Theorem 1.4.19]{MeyerNieberg}). Finally, the composition with the canonical embedding $J_{X^*}$ yields a lattice projection.
\end{rem}

The above remark will be key in the proof of existence of a free object over a Banach space $E$ in the category $\mathcal{BL}^{*(p)}$ for $p>1$ in the following sense.

\begin{prop}\label{prop: bidual of FBLp is FBL*p}
    Let $E$ be a Banach space and $1<p\leq \infty$. Then, for every $p$-convex Banach lattice $X^*$ which is the dual of some Banach lattice and every linear and bounded operator $T: E\rightarrow X^*$ there exists a unique weak*-to-weak* continuous lattice homomorphism $\check{T}: \fbp[E]^{**} \rightarrow X^*$, such that $\check{T} \circ J_{\fbp[E]} \circ \phi_E = T$. Moreover, $\norm[0]{\check{T}}\leq M^{(p)}(X^*)\norm{T}$.
\end{prop}

\begin{proof}
    Since $X^*$ is $p$-convex, we can consider the following commutative diagram, where $\hat{T}$ is the unique extension of $T$ to $\fbp[E]$ as a lattice homomorphism:
    \begin{equation*}
	\xymatrix{
		\fbp[E]^{**} \ar@{->}[r]^{\,\,\,\,\,\,\,\,\,\,\,\,\hat{T}^{**}}& X^{***} \ar@{->}[d]^{J_X^*}  \\
		\fbp[E] \ar@{->}[r]^{\,\,\,\,\,\,\hat{T}} \ar@{->}[u]^{J_{\fbp[E]}} & X^* \\
        E \ar@{->}[ru]_{T} \ar@{->}[u]^{\phi_E} & 
	}
    \end{equation*}
    Since $J_X^*$ is a norm one, weak*-to-weak* continuous lattice homomorphism satisfying $J_X^*\circ J_{X^*}=I_{X^*}$, it follows that the operator $\check{T} := J_X^*\circ \hat{T}^{**}$ is also a weak*-to-weak* continuous lattice homomorphism such that $\check{T} \circ J_{\fbp[E]} \circ \phi_E = T$ and $\norm[0]{\check{T}}\leq M^{(p)}(X)\norm{T}$. Now, suppose there exists another weak*-to-weak* continuous lattice homomorphism $\check{S}: \fbp[E]^{**} \rightarrow X^*$ satisfying $\check{S} \circ J_{\fbp[E]} \circ \phi_E = T$. The uniqueness of extension of $T$ to $\fbp[E]$ implies that $\check{S} \circ J_{\fbp[E]} = \hat{T}$. Since $J_{\fbp[E]}$ has weak* dense range and both $\check{T}$ and $\check{S}$ are weak*-to-weak* continuous, they are uniquely determined by their values on $J_{\fbp[E]}(\fbp[E])$, which coincide. Hence, $\check{T} = \check{S}$.
\end{proof}

\begin{rem}\label{rem: argument fails for p=1}
    The previous argument cannot be applied to the case $p=1$ because there exists objects of $\mathcal{BL}^*$ that do not have an order continuous predual (see \cite{GLX} for a characterization of this property). Consider for instance $\mathcal{M}[0,1]$, the space of Borel measures over the interval $[0,1]$, which is the dual of the Banach lattice $\Ccal[0,1]$ but does not have any order continuous predual. Indeed, $\mathcal{M}[0,1]$ is an $AL$-space, so by \cite[Proposition 1.4.7]{MeyerNieberg}, all its predual Banach lattices must be $AM$-spaces. Since the only order continuous $AM$-spaces are of the form $c_0(\Gamma)$ for some set $\Gamma$ \cite[Lemma 2.7.12]{MeyerNieberg} and the dual of $c_0(\Gamma)$ is $\ell_1(\Gamma)$, which is never isomorphic to $\mathcal{M}[0,1]$ (note that $L_1[0,1]$ lattice embeds in $\mathcal{M}[0,1]$ but not in $\ell_1(\Gamma)$), we conclude that $\mathcal{M}[0,1]$ does not have an order continuous predual.
\end{rem}

However, we could still try to define a free object in the category $\mathcal{BL}^*$ of duals of Banach lattices in the following sense.

\begin{defn}\label{defi: free dual BL over a Bsp}
    Let $E$ be a Banach space. The \emph{free dual Banach lattice over $E$} is a Banach lattice $\fbl^*[E]$ which is the dual of some other Banach lattice $Z_E$, together with a linear isometric embedding $\iota_E: E\rightarrow \fbl^*[E]$, such that for every bounded linear operator $T$ from $E$ to the dual of some Banach lattice $X^*$, there exists a unique weak*-to-weak* continuous lattice homomorphism $\check{T}: \fbl^*[E] \rightarrow X^*$, such that $\check{T} \circ \iota_E = T$, and moreover, $\norm[0]{\check{T}}=\norm{T}$.
\end{defn}

By standard arguments it can be proved that whenever this object exists, it is unique. However, we do not know if such a free object exists for every Banach space $E$. The natural candidate to satisfy this definition is $\fbl[E]^{**}$. Nevertheless, we cannot hope for \Cref{prop: bidual of FBLp is FBL*p} to be true in general when $p=1$, as the following example shows.

\begin{prop}\label{prop: bidual of FBL is not FBL*E when E not oc predual}
    Let $X^*$ be the dual of a Banach lattice, so that $X^*$ does not admit an order continuous predual (take for instance $X^*=\mathcal M(0,1)$ as mentioned in \Cref{rem: argument fails for p=1}). The pair $(\fbl[X^*]^{**}, K_{X^*})$ does not have the universal property of \Cref{defi: free dual BL over a Bsp}, where $K_{X^*}=J_{\fbl[X^*]} \circ \phi_{X^*}$. More specifically, the identity operator $I_{X^*}$ on ${X^*}$ cannot be extended to a weak*-to-weak* continuous lattice homomorphism $\check{I}_{X^*}: \fbl[X^*]^{**} \rightarrow X^*$ such that $\check{I}_{X^*} \circ K_{X^*} = I_{X^*}$.
\end{prop}

\begin{proof}
    Suppose that there exists a weak*-to-weak* continuous lattice homomorphism $\check{I}_{X^*}: \fbl[X^*]^{**} \rightarrow X^*$ such that $\check{I}_{X^*} \circ K_{X^*} = I_{X^*}$. Let $S:X\rightarrow \fbl[X^*]^*$ be such that $S^*=\check{I}_{X^*}$. Note that $\check{I}_{X^*}$ is an onto lattice homomorphism, so $S$ must be an interval preserving embedding as a consequence of the Open Mapping Principle (cf. \cite[Theorem 2.25]{FHHMZBanachSpaceTheory}) and \cite[Theorem 1.4.19]{MeyerNieberg}.\\
    
    On the other hand, since $\check{I}_{X^*} \circ K_{X^*} = I_{X^*}$ and $\check{I}_{X^*} \circ J_{\fbl[X^*]}$ is a lattice homomorphism, the uniqueness of extension of $I_{X^*}$ to $\fbl[X^*]$ implies that $\check{I}_{X^*} \circ J_{\fbl[X^*]}= \hat{I}_{X^*}$. Considering the adjoint of this operator and composing it by the right with $J_X$, the canonical embedding of $X$ into its bidual, we get that 
    \begin{align*}
        \hat{I}_{X^*}^*\circ J_X & =  J_{\fbl[X^*]}^* \circ \check{I}_{X^*}^*\circ J_X =  J_{\fbl[X^*]}^* \circ S^{**}\circ J_X \\
        & = J_{\fbl[X^*]}^* \circ J_{\fbl[X^*]^*} \circ S = S,
    \end{align*}
    where in the last equality we have used that $J_{\fbl[X^*]}^* \circ J_{\fbl[X^*]^*}= I_{\fbl[X^*]^*}$. Using again the Open Mapping Principle (cf. \cite[Theorem 2.25]{FHHMZBanachSpaceTheory}) and \cite[Theorem 1.4.19]{MeyerNieberg} it is easy to check that $\hat{I}_{X^*}^*$ is an interval preserving embedding (in particular, it is injective). Thus, for every $x\in X_+$ it follows that
    \begin{equation*}
        \hat{I}_{X^*}^*(J_X[0,x])=S[0,x]=[0,Sx]=[0,\hat{I}_{X^*}^*(J_Xx)] = \hat{I}_{X^*}^*[0,J_Xx],
    \end{equation*}
    so $J_X[0,x]=[0,J_Xx]$ and $J_X$ is interval preserving. This is equivalent to $J_X(X)$ being an ideal of $X^{**}$, so by \cite[Theorem 2.4.2]{MeyerNieberg} $X$ must be order continuous, which is a contradiction.
\end{proof}

This shows that, in general, $\fbl[E]^{**}$ is not the free object over the Banach space $E$ in the sense of \Cref{defi: free dual BL over a Bsp}. However, we will provide a characterization of when $\fbl[E]^{**}=\fbl^*[E]$.

\begin{thm}\label{theo: bidual of FBL is FBL*E iff ell1 not complemented in E}
    Let $E$ be a Banach space. The following are equivalent:
    \begin{enumerate}
        \item $E$ does not contain a complemented subspace isomorphic to $\ell_1$.
        \item The pair $(\fbl[E]^{**}, K_E)$ with $K_E=J_{\fbl[E]} \circ \phi_E$ coincides with the free dual Banach lattice over $E$, $(\fbl^*[E],\iota_E)$.
    \end{enumerate}
\end{thm}

\begin{proof}
    $(1)\Rightarrow (2)$ Let $X$ be a Banach lattice and $T:E \rightarrow X^*$ a bounded linear operator. By \cite[Theorem 4]{BessagaPelczynski}, $E$ does not contain a complemented copy of $\ell_1$ if and only if $E^*$ does not contain a copy of $c_0$. Thus, applying \cite[Theorem I.2]{GhoussoubJohnson} we obtain that the operator $T^*\circ J_X: X\rightarrow E^*$ can be factored as $T^*\circ J_X=S\circ R$, where $R:X\rightarrow Z$, $S:Z\rightarrow E^*$ and $Z$ is an order continuous Banach lattice. Moreover, $Z$ is the norm completion of a quotient of $X$ by an ideal, and $R$ is the canonical quotient, so it is a lattice homomorphism, and, by \cite[Proposition II.2.5]{Schaefer}, it is almost interval preserving. Note that, in particular, $T=R^*\circ S^*\circ J_E$, and we can consider the following commutative diagram:
    \begin{equation*}
	\xymatrix{
		\fbl[E]^{**} \ar@{->}[r]^{\,\,\,\,\,\,\,\,\,\,\,\,\widehat{S^*\circ J_E}^{**}}& Z^{***} \ar@{->}[d]^{J_Z^*} &  \\
		\fbl[E] \ar@{->}[r]^{\,\,\,\,\,\,\widehat{S^*\circ J_E}} \ar@{->}[u]^{J_{\fbl[E]}} & Z^* \ar@{->}[r]^{R^*} & X^* \\
        E \ar@{->}[rru]_{T} \ar@{->}[ru]^{S^*\circ J_E} \ar@{->}[u]^{\phi_E} &  & 
	}
    \end{equation*}
    Since $Z$ is order continuous, by \cite[Theorem 2.4.2]{MeyerNieberg} $J_Z$ must be interval preserving, so using \cite[Theorem 1.4.19]{MeyerNieberg} we conclude that the operator $\check{T}= R^*\circ J_Z^* \circ (\widehat{S^*\circ J_E})^{**}$ is a weak*-to-weak* continuous lattice homomorphism such that $\check{T} \circ J_{\fbl[E]} \circ \phi_E = T$. That this extension is unique follows from the universal property of $\fbl[E]$ and the weak* density of $\fbl[E]$ inside its bidual. Since $\norm{R}\leq \norm{T}$ and $\norm{S}\leq 1$ by construction (see the proof in \cite[Theorem I.2]{GhoussoubJohnson} for the details), it is straightforward to check that $\norm[0]{\check{T}}=\norm{T}$. Thus, we have proved that the pair $(\fbl[E]^{**}, J_{\fbl[E]} \circ \phi_E)$ satisfies the conditions of \Cref{defi: free dual BL over a Bsp}. \\

    $(2)\Rightarrow (1)$ Let us suppose the pair $(\fbl[E]^{**}, K_E)$ has the universal property of \Cref{defi: free dual BL over a Bsp} but $E$ contains a complemented copy of $\ell_1$. In particular, there exists a quotient $P:E\rightarrow \ell_1$. Let us consider the separable space $L_1[0,1]$, which is a quotient of $\ell_1$ through a certain $Q:\ell_1\rightarrow L_1[0,1]$, and the operator $R:L_1[0,1]\rightarrow \mathcal M[0,1]$ given for each Borel set $A\subset[0,1]$ and $g\in L_1[0,1]$ by
    \begin{equation*}
        Rg (A)=\int_A gd\lambda,
    \end{equation*}
    where $\lambda$ denotes the Lebesgue measure over $[0,1]$ and $\mathcal{M}[0,1]$ represents the space of Borel measures over the interval $[0,1]$, which is the dual of $\Ccal[0,1]$. Note that $R(B_{L_1[0,1]})$ is weak* dense in $B_{\mathcal{M}[0,1]}$. Indeed, given $t\in [0,1]$, let $(I_{t,n})_n$ be a sequence of intervals of $[0,1]$ of length $\frac{1}{n}$ decreasing to $\cbr{t}$. Then, it is straightforward to check that the measures $\mu_{t,n}=\lambda(I_{t,n})^{-1}R\chi_{I_{t,n}}\in R(B_{L_1[0,1]})$ converge weak* to $\eta_t$, the Dirac measure on $t$. Thus, $\cbr{\pm\eta_t:t\in [0,1]}$, which is the set of extreme points of $B_{\mathcal{M}[0,1]}$ \cite[Proposition 2.1.2]{MeyerNieberg}, is contained in the weak* closure of $R(B_{L_1[0,1]})$. By Krein-Milman Theorem, we conclude that the whole $B_{\mathcal{M}[0,1]}$ lies in the weak* closure of $R(B_{L_1[0,1]})$.\\

    Now, consider the operator $T=R\circ Q\circ P:E\rightarrow \mathcal{M}[0,1]$. By our assumption, there exists a weak*-to-weak* continuous lattice homomorphism $\check{T}:\fbl[E]^{**}\rightarrow \mathcal{M}[0,1]$ such that $\check{T}\circ K_E=T$. We claim that $\check{T}$ is onto. Indeed, fix some $\mu\in B_{\mathcal{M}[0,1]}$. We have seen above that $R(B_{L_1[0,1]})$ is weak* dense in $B_{\mathcal{M}[0,1]}$, so there exists some net $(g_{\alpha})_{\alpha}\subset B_{L_1[0,1]}$ such that $(Rg_{\alpha})_{\alpha}$ converges weak* to $\mu$. Since $Q\circ P$ is onto, by the Open Mapping Theorem \cite[Corollary 2.26]{FHHMZBanachSpaceTheory} there exists some $r>0$ such that $B_{L_1[0,1]}\subset Q\circ P(rB_E)$. Hence, we can find a bounded net $(x_{\alpha})_{\alpha}\subset E$ such that $Q\circ P x_{\alpha}=g_{\alpha}$. Now, using the weak* compactness of $B_{\fbl[E]^{**}}$, we can extract a subnet $(x_{\beta})_{\beta}\subset (x_{\alpha})_{\alpha}$ such that $(K_Ex_{\beta})_{\beta}$ is weak* convergent to some $f^{**}\in \fbl[E]^{**}$. Evaluating on $\check{T}$ we obtain that $\check{T}\circ K_E x_{\beta}=Rg_{\beta}$ converges weak* to both $\mu$ and $\check{T}f^{**}$, so $\mu=\check{T}f^{**}$ and $\check{T}$ is an onto lattice homomorphism. By \cite[Proposition II.2.5]{Schaefer}, this implies that $\check{T}$ is also interval preserving. Let $S:\Ccal[0,1]\rightarrow \fbl[E]^*$ be an operator such that $\check{T}=S^*$. By the Open Mapping Principle (cf. \cite[Theorem 2.25]{FHHMZBanachSpaceTheory}) and \cite[Theorem 1.4.19]{MeyerNieberg} it follows that $S$ must be an interval preserving embedding of $\Ccal[0,1]$ into $\fbl[E]^*$.\\ 
    
    Note that $\fbl[E]^*$ is a dual of a Banach lattice, so in particular it is ($\sigma$-)Dedekind complete. Clearly, $\Ccal[0,1]$ is not $\sigma$-Dedekind complete. Since any ideal of a $\sigma$-Dedekind complete Banach lattice is again $\sigma$-Dedekind complete, it follows that $\Ccal[0,1]$ cannot be embedded as an ideal in $\fbl[E]^*$. Thus, $E$ cannot contain a complemented copy of $\ell_1$.
\end{proof}

It remains an open question whether $\fbl^*[E]$ may exist when $E$ contains a complemented subspace isomorphic to $\ell_1$. The following result provides some properties that $\fbl^*[E]$ must satisfy, in case it exists. Note that all of them become trivial when $\fbl^*[E]$ happens to coincide with $\fbl[E]^{**}$, that is, when $E$ does not contain a complemented copy of $\ell_1$.

\begin{lem}\label{lemm: known facts about FBL*[E]}
    Let $E$ be a Banach space, and assume that $\fbl^*[E]$ exists. Then:
    \begin{enumerate}
        \item The unique extension $\hat{\iota}_E: \fbl[E]\rightarrow \fbl^*[E]$ of $\iota_E$ as a lattice homomorphism is an isometric embedding.
        \item $\check{K}_E: \fbl^*[E]\rightarrow \fbl[E]^{**}$ is an onto interval preserving lattice homomorphism, where $K_E= J_{\fbl[E]}\circ \phi_E$.
        \item $\overline{\hat{\iota}_E(\fbl[E])}^{w^*}$ is a subspace of $\fbl^*[E]$ linearly isometric to $\fbl[E]^{**}$, and it is complemented in $\fbl^*[E]$ by means of a positive contractive weak*-to-weak* continuous projection given by $J_{Z_E}^*\circ \hat{\iota}_E^{**}\circ\check{K}_E$.
        \item $\fbl[E]^*$ embeds isometrically in $Z_E$ as an ideal, and it is complemented in $Z_E$ by means of a positive contractive projection.
    \end{enumerate}
\end{lem}

\begin{proof}
    $(1)$ Note that
    \begin{equation*}
        \check{K}_E\circ \hat{\iota}_E\circ \phi_E= \check{K}_E\circ \iota_E = K_E= J_{\fbl[E]}\circ \phi_E.
    \end{equation*}
    Since both $\check{K}_E\circ \hat{\iota}_E$ and $J_{\fbl[E]}$ are lattice homomorphisms, the uniqueness of extension of operators to $\fbl[E]$ implies that $\check{K}_E\circ \hat{\iota}_E = J_{\fbl[E]}$. Now, using that $J_{\fbl[E]}$ is an isometric embedding and both $\hat{\iota}_E$ and $\check{K}_E$ have norm one, we obtain that
    \begin{equation*}
        \norm{f}_{\fbl[E]}= \norm[0]{J_{\fbl[E]}f}_{\fbl[E]^{**}} \leq \norm{\hat{\iota}_E f}_{\fbl^*[E]} \leq \norm{f}_{\fbl[E]}
    \end{equation*}
    for every $f\in \fbl[E]$.\\

    $(2)$ First, let us show that $\check{K}_E$ is onto. Given some $f^{**}\in B_{\fbl[E]^{**}}$, by Goldstine's Theorem we know that there exists a net $(f_{\alpha})_{\alpha}\subset B_{\fbl[E]}$ such that $(J_{\fbl[E]} f_{\alpha})_{\alpha}$ is weak* convergent to $f^{**}$ in $\fbl[E]^{**}$. Now, observe that $(\hat{\iota}_E f_{\alpha})_{\alpha}\subset B_{\fbl^*[E]}$, which is a weak* compact set, so there exists a subnet $(\hat{\iota}_E f_{\beta})_{\beta}$ weak* convergent to some $z^*\in B_{\fbl^*[E]}$. Using that $\check{K}_E$ is weak*-to-weak* continuous and satisfies $\check{K}_E\circ \hat{\iota}_E = J_{\fbl[E]}$, we conclude that $f^{**}=\check{K}_E z^*$. Thus, $\check{K}_E$ is an onto lattice homomorphism, and by \cite[Proposition II.2.5]{Schaefer}, it follows that it is interval preserving.\\

    $(3)$ Consider the operator $J_{Z_E}^*\circ \hat{\iota}_E^{**}: \fbl[E]^{**}\rightarrow \fbl^*[E]$. Using that $J_{Z_E}^*\circ J_{Z_E^*}$ is the identity on $Z_E^*=\fbl^*[E]$, it follows that
    \begin{equation*}
        \check{K}_E\circ J_{Z_E}^*\circ \hat{\iota}_E^{**}\circ J_{\fbl[E]}= \check{K}_E\circ J_{Z_E}^*\circ J_{Z_E^*}\circ \hat{\iota}_E = \check{K}_E\circ \hat{\iota}_E = J_{\fbl[E]}.
    \end{equation*}
    Since $\check{K}_E\circ J_{Z_E}^*\circ \hat{\iota}_E^{**}$ is  weak*-to-weak* continuous and $J_{\fbl[E]}$ has weak* dense range, we conclude that $\check{K}_E\circ J_{Z_E}^*\circ \hat{\iota}_E^{**}=I_{\fbl[E]^{**}}$. This implies that $J_{Z_E}^*\circ \hat{\iota}_E^{**}$ is an isometric embedding. Indeed, given any $f^{**}\in \fbl[E]^{**}$, we have
    \begin{align*}
        \norm{f^{**}}_{\fbl[E]^{**}} & = \norm[0]{\check{K}_E\circ J_{Z_E}^*\circ \hat{\iota}_E^{**}f^{**}}_{\fbl[E]^{**}}\\
        & \leq  \norm[0]{J_{Z_E}^*\circ \hat{\iota}_E^{**}f^{**}}_{\fbl^*[E]} \leq \norm{f^{**}}_{\fbl[E]^{**}}.
    \end{align*}
    The next step is to show that the range of $J_{Z_E}^*\circ \hat{\iota}_E^{**}$ is the subspace $U_E=\overline{\hat{\iota}_E(\fbl[E])}^{w^*}$. The inclusion $J_{Z_E}^*\circ \hat{\iota}_E^{**}(\fbl[E]^{**})\subset U_E$ is straightforward, since $J_{Z_E}^*\circ \hat{\iota}_E^{**}$ is  weak*-to-weak* continuous and $J_{\fbl[E]}$ has weak* dense range. To check the reverse inclusion, let $z^*\in U_E$ and $(f_{\alpha})_{\alpha}\subset \fbl[E]$ such that $(\hat{\iota}_E f_{\alpha})_{\alpha}$ is weak* convergent to $z^*$. Using the weak* continuity of $\check{K}_E$ and the identity  $\check{K}_E\circ \hat{\iota}_E = J_{\fbl[E]}$ it follows that $( J_{\fbl[E]} f_{\alpha})_{\alpha}$ is weak* convergent to $\check{K}_E z^*$. Now, recall that $J_{Z_E}^*\circ \hat{\iota}_E^{**}\circ J_{\fbl[E]} = \hat{\iota}_E$, so $(\hat{\iota}_E f_{\alpha})_{\alpha}$ weak* converges to $J_{Z_E}^*\circ \hat{\iota}_E^{**}\circ \check{K}_E z^*$. The uniqueness of the weak* limit implies that $J_{Z_E}^*\circ \hat{\iota}_E^{**}\circ \check{K}_E z^*= z^*$, so $U_E\subset J_{Z_E}^*\circ \hat{\iota}_E^{**}(\fbl[E]^{**})$. We conclude that $J_{Z_E}^*\circ \hat{\iota}_E^{**}\circ \check{K}_E$ is a positive contractive weak*-to-weak* continuous projection over $U_E$. \\

    $(4)$ By definition $\check{K}_E$ is weak*-to-weak* continuous, so it is the adjoint operator of some $M_E: \fbl[E]^*\rightarrow Z_E$. Since $\check{K}_E$ is an onto interval preserving lattice homomorphism, by the Open Mapping Principle (cf. \cite[Theorem 2.25]{FHHMZBanachSpaceTheory}) and \cite[Theorem 1.4.19]{MeyerNieberg} it follows that $M_E$ is an interval preserving lattice embedding. From the proof of $(3)$ we know that
    \begin{equation*}
        (\hat{\iota}^*_E\circ J_{Z_E}\circ M_E)^*=\check{K}_E\circ J_{Z_E}^*\circ \hat{\iota}_E^{**}=I_{\fbl[E]^{**}},
    \end{equation*}
    so the operator $\hat{\iota}^*_E\circ J_{Z_E}\circ M_E$ must be the identity on $\fbl[E]^*$. Proceeding as before, we can see that $M_E$ is an isometry, and that $M_E \circ \hat{\iota}^*_E\circ J_{Z_E}$ is a positive contractive projection.
\end{proof}

In addition to the previous result, we present a compilation of statements equivalent to $\fbl[E]^{**}$ being isomorphic to $\fbl^*[E]$ in the category $\mathcal{BL}^*$. 

\begin{prop}\label{prop: equivalences of FBL*E=FBLE**}
    Let $E$ be a Banach space, and assume that $\fbl^*[E]$ exists. The following are equivalent:
    \begin{enumerate}
        \item The pair $(\fbl[E]^{**}, K_E)$ coincides with the free dual Banach lattice over $E$, $(\fbl^*[E],\iota_E)$.
        \item $\hat{\iota}_E: \fbl[E]\rightarrow \fbl^*[E]$ has weak* dense range.
        \item $\overline{\hat{\iota}_E(\fbl[E])}^{w^*}$ is a sublattice of $\fbl^*[E]$.
        \item $J_{Z_E}^*\circ \hat{\iota}_E^{**}: \fbl[E]^{**}\rightarrow \fbl^*[E]$ is a lattice homomorphism.
        \item $\check{K}_E: \fbl^*[E]\rightarrow \fbl[E]^{**}$ is a lattice and weak*-to-weak* continuous isometric isomorphism.
        \item There exists a Banach lattice $Y$ such that $\fbl^*[E]=Y^{**}$ and $\iota_E(E)\subset J_Y(Y)$.
        \item The unique extension $\check{j}_E:\fbl^*[E]\rightarrow \fbl^*[E]^{**}$ of $j_E=J_{\fbl^*[E]}\circ \iota_E$ as a weak*-to-weak* continuous lattice homomorphism is an isometric embedding.
    \end{enumerate}
\end{prop}

\begin{proof}
    $(1)\Rightarrow (2)$ Let $\widetilde{\iota}_E:\fbl[E]^{**}\rightarrow \fbl^*[E]$ be the unique extension of $\iota_E$ to $\fbl[E]^{**}$ as a weak*-to-weak* continuous lattice homomorphism. By standard arguments we can show that $\check{K}_E$ and $\widetilde{\iota}_E$ are mutually inverse. In particular, $\widetilde{\iota}_E$ is onto. Moreover, $\hat{\iota}_E=\widetilde{\iota}_E\circ J_{\fbl[E]}$, $J_{\fbl[E]}$ has weak* dense range and  $\widetilde{\iota}_E$ is weak*-to-weak* continuous, so $\hat{\iota}_E$ has weak* dense range. \\

    $(2)\Rightarrow (3)$ Trivial, since $\overline{\hat{\iota}_E(\fbl[E])}^{w^*}$ coincides with $\fbl^*[E]$.\\

    $(3)\Rightarrow (4)$ Recall from \Cref{lemm: known facts about FBL*[E]} that $J_{Z_E}^*\circ \hat{\iota}_E^{**}(\fbl[E]^{**}) =\overline{\hat{\iota}_E(\fbl[E])}^{w^*}$, and denote this subspace by $U_E$. Since it is a sublattice, for any $f_1^{**}\in \fbl[E]^{**}$ there must exist some $f_0^{**}\in \fbl[E]^{**}$ such that
    \begin{equation*}
        \abs[0]{J_{Z_E}^*\circ \hat{\iota}_E^{**} f_1^{**}} = J_{Z_E}^*\circ \hat{\iota}_E^{**} f_0^{**}.
    \end{equation*}
    Applying the lattice homomorphism $\check{K}_E$ and using that $\check{K}_E\circ J_{Z_E}^*\circ \hat{\iota}_E^{**} =I_{\fbl[E]^{**}}$ we obtain that $f_0^{**} = \abs[0]{f_1^{**}}$. Thus, 
    \begin{equation*}
        \abs[0]{J_{Z_E}^*\circ \hat{\iota}_E^{**} f_1^{**}}  = J_{Z_E}^*\circ \hat{\iota}_E^{**} \abs[0]{f_1^{**}},
    \end{equation*}
    so $J_{Z_E}^*\circ \hat{\iota}_E^{**}$ must be a lattice homomorphism.\\

    $(4)\Rightarrow (5)$ On the one hand we know from \Cref{lemm: known facts about FBL*[E]} that $\check{K}_E\circ J_{Z_E}^*\circ \hat{\iota}_E^{**}=I_{\fbl[E]^{**}}$. On the other hand, it follows from the hypothesis that $J_{Z_E}^*\circ \hat{\iota}_E^{**}\circ\check{K}_E$ is a weak*-to-weak* continuous lattice homomorphism such that $J_{Z_E}^*\circ \hat{\iota}_E^{**}\circ\check{K}_E\circ \iota_E=\iota_E$. The uniqueness of extension to $\fbl^*[E]$ yields that $J_{Z_E}^*\circ \hat{\iota}_E^{**}\circ\check{K}_E=I_{\fbl^*[E]}$, so $\check{K}_E$ is a lattice isomorphism. Since both $\check{K}_E$ and $J_{Z_E}^*\circ \hat{\iota}_E^{**}$ have norm one, it is also an isometry.\\

    $(5)\Rightarrow (6)$ Just take $Y=\fbl[E]$.\\

    $(6)\Rightarrow (7)$ Let us show that $\check{j}_E=J_Y^{**}$, which is an isometric embedding. Indeed, the condition $\iota_E(E)\subset J_Y(Y)$ implies that the operator $J_Y^{-1}\circ \iota_E:E\rightarrow Y$ is well-defined. Thus,
    \begin{equation*}
        J_Y^{**}\circ \iota_E= J_Y^{**}\circ J_Y\circ J_Y^{-1} \circ \iota_E = J_{Y^{**}}\circ J_Y\circ J_Y^{-1} \circ \iota_E = J_{\fbl^*[E]} \circ \iota_E =j_E.
    \end{equation*}
    The uniqueness of extension of $j_E$ to $\fbl^*[E]$ implies that $\check{j}_E=J_Y^{**}$, as we wanted.\\

    $(7)\Rightarrow (5)$ We know from \Cref{lemm: known facts about FBL*[E]} that $\check{K}_E$ is onto. Let us see that it is also an isometric embedding, and hence an isomorphism. To do so, we will check that $\hat{\iota}_E^{**}\circ\check{K}_E= \check{j}_E$. Indeed, $\hat{\iota}_E^{**}\circ\check{K}_E$ is a weak*-to-weak* continuous lattice homomorphism satisfying
    \begin{equation*}
        \hat{\iota}_E^{**}\circ\check{K}_E \circ \iota_E=\hat{\iota}_E^{**}\circ  J_{\fbl[E]}\circ \phi_E = J_{\fbl^*[E]}\circ \hat{\iota}_E\circ  \phi_E =  J_{\fbl^*[E]}\circ \iota_E = j_E,
    \end{equation*}
    so the uniqueness of extension of $j_E$ implies that  $\hat{\iota}_E^{**}\circ\check{K}_E= \check{j}_E$. Since both $\hat{\iota}_E^{**}$ and $\check{j}_E$ are isometric embeddings, $\check{K}_E$ is too.\\

    $(5)\Rightarrow (1)$ Trivial, since $\check{K}_E$ is an isomorphism in the category $\mathcal{BL}^*$.\\
\end{proof}

\Cref{prop: equivalences of FBL*E=FBLE**} provides some useful information in the search for an answer to the question of the existence of $\fbl^*[E]$ when $E$ contains a complemented copy of $\ell_1$. For instance, we can affirm that if $\fbl^*[E]$ exists, then it cannot be the bidual of a Banach lattice and cannot contain $\fbl[E]^{**}$ as a sublattice (at least in the canonical way described in \Cref{lemm: known facts about FBL*[E]}).\\

\section{Free duals of $AL$-spaces}\label{sec:AM-spaces}

The results from the previous section can be adapted to the context of $AM$-spaces with order unit (i.e., $\Ccal(K)$-spaces). More specifically, we can define the category $\mathcal{AL}^{*}$, whose objects are $AM$-spaces which are duals of Banach lattices (equivalently, duals of $AL$-spaces \cite[Proposition 1.4.7]{MeyerNieberg}) and whose morphisms are weak*-to-weak* continuous lattice homomorphisms that preserve the order unit, meaning that normalized morphisms send the order unit of the domain to the order unit of the image. Throughout this section, the order unit of an $AM$-space $X$ is the unique element $e\in X_+$ such that $[-e,e]=B_{X}$. \\ 

Note that by \cite[Proposition 1.4.7]{MeyerNieberg} every $AM$-space $X^*$ which is the dual of some Banach lattice $X$ has an order unit $e$, and the norm in $X^*$ coincides with the order unit norm $\norm{\cdot}_e$ (equivalently, $[-e,e]=B_{X^*}$), so the restriction imposed on the morphisms of the category $\mathcal{AL}^{*}$ is consistent. Moreover, by Kakutani's and Dixmier's Theorems (cf. \cite[Theorems 2.1.3 and 2.1.7]{MeyerNieberg} an object is in $\mathcal{AL}^{*}$ if and only if it is isometrically lattice isomorphic to a $\Ccal(K)$ space with $K$ hyper-stonean.\\

As $AM$-spaces are in particular $\infty$-convex, a property similar to \Cref{rem: order continuos predual when p>1} holds for the category $\mathcal{AL}^{*}$, since $J_X^*$ preserves the order unit. This is a consequence of the equality $J_X^*\circ J_{X^*}=I_{X^*}$, together with the simple fact that if $e$ is the order unit of the $AM$-space $X^*$, then $J_{X^*}e$ is the order unit of $X^{***}$.\\

Thus, we can formulate a statement analogous to \Cref{prop: bidual of FBLp is FBL*p} in this new setting. The proof is essentially the same, so we will skip some of the details.

\begin{prop}\label{prop: bidual of C(BE*) is FBL*infty1}
    Let $E$ be a Banach space. Then, for every $AM$-space $X^*$ which is the dual of some Banach lattice and every linear and bounded operator $T: E\rightarrow X^*$ there exists a unique weak*-to-weak* continuous lattice homomorphism $\check{T}: \Ccal(B_{E^*})^{**} \rightarrow X^*$, such that $\check{T} \circ J_{\Ccal(B_{E^*})} \circ \delta_E = T$ and $\check{T} \circ J_{\Ccal(B_{E^*})} \uno_{E^*}= \norm{T}e_{X^*}$, where $\uno_{E^*}$ is the constant function on $B_{E^*}$ and $e_{X^*}$ is the order unit of $X^*$. Moreover, $\norm[0]{\check{T}}=\norm{T}$.
\end{prop}

\begin{proof}
    First, note that clearly $\Ccal(B_{E^*})^{**}$ is an object of $\mathcal{AL}^{*}$. Now, let us consider the following commutative diagram, where $\widetilde{T}$ is the extension of $T$ to $\Ccal(B_{E^*})$ given by \Cref{theo: free C(K) over E}:
    \begin{equation*}
	\xymatrix{
		\Ccal(B_{E^*})^{**} \ar@{->}[r]^{\,\,\,\,\,\,\,\,\,\widetilde{T}^{**}}& X^{***} \ar@{->}[d]^{J_X^*}  \\
		\Ccal(B_{E^*}) \ar@{->}[r]^{\,\,\,\,\,\,\widetilde{T}} \ar@{->}[u]^{J_{\Ccal(B_{E^*})}} & X^* \\
        E \ar@{->}[ru]_{T} \ar@{->}[u]^{\delta_E} & 
	}
    \end{equation*}
    The rest of the proof is similar to the one in \Cref{prop: bidual of FBLp is FBL*p}.
\end{proof}

To finish this section, let us focus on the relation between $\Ccal(B_{E^*})$ and $\FBLi[E]$, which can be represented as $\Ccal_{ph}(B_{E^*})$ in a canonical way. These Banach lattices are never lattice isomorphic. Indeed, $\Ccal(B_{E^*})$ always has an order unit, while in $\FBLi[E]$ this only happens when $E$ has finite dimension \cite[Proposition 9.1]{OTTT}. In that case, $\FBLi[E]$ is isometrically lattice isomorphic to $\Ccal(S_{E^*})$, which by Banach--Stone's Theorem can never be lattice isomorphic to $\Ccal(B_{E^*})$ (cf. \cite[Corollary 3.2.11]{MeyerNieberg}). Neither can $\FBLi[E]$ be complemented in $\Ccal(B_{E^*})$ by means of a positive projection when $E$ has infinite dimension: if there was such a positive projection $P: \Ccal(B_{E^*}) \rightarrow \Ccal(B_{E^*})$ over $\FBLi[E]\equiv \Ccal_{ph}(B_{E^*})$, it would follow that $P\uno_{E^*}$ should be an order unit of $\FBLi[E]$, which by \cite[Proposition 9.1]{OTTT} is not possible.\\

This last situation no longer holds when we compare the bidual spaces instead: $\FBLi[E]^{**}$ is complemented in $\Ccal(B_{E^*})^{**}$ with a positive and contractive projection. This is a consequence of the fact that every Dedekind complete $AM$-space is injective (cf. \cite[Theorem 3.2.4]{MeyerNieberg}), since every dual of a Banach lattice is Dedekind complete. Nevertheless, in this particular situation we have a canonical construction of the projection, and it is a lattice homomorphism.

\begin{prop}\label{prop: bidual of FBLinfty complemented in bidual of C(BE*)}
    Let $K_E=J_{\FBLi[E]}\circ \phi_E$ denote the canonical embedding of $E$ into $\FBLi[E]^{**}$ and $\check{K}_E: \Ccal(B_{E^*})^{**}\rightarrow \FBLi[E]^{**}$ the extension of $K_E$ given by \Cref{prop: bidual of C(BE*) is FBL*infty1}. Then the operator $\hat{\delta}^{**}_E \circ \check{K}_E: \Ccal(B_{E^*})^{**}\rightarrow \Ccal(B_{E^*})^{**}$ is a contractive lattice projection over $\FBLi[E]^{**}$, where $\hat{\delta}_E$ is the canonical embedding of $\FBLi[E]$ into $\Ccal(B_{E^*})$ given by the restriction to $B_{E^*}$.
\end{prop}

\begin{proof}
    It is clear that $\hat{\delta}_E^{**}: \FBLi[E]^{**} \rightarrow \Ccal(B_{E^*})^{**}$ is an isometric lattice embedding. From the following commutative diagram,
    \begin{equation*}
	\xymatrix{
		\FBLi[E]^{**} \ar@{->}[r]^{\,\,\,\,\,\,\hat{\delta}_E^{**}} & \Ccal(B_{E^*})^{**} \ar@{->}[r]^{\check{K}_E} & \FBLi[E]^{**}  \\
		\FBLi[E] \ar@{->}[u]^{J_{\FBLi[E]}} \ar@{->}[r]^{\,\,\,\,\,\,\hat{\delta}_E} & \Ccal(B_{E^*})  \ar@{->}[u]^{J_{\Ccal(B_{E^*})}} & \FBLi[E] \ar@{->}[u]_{J_{\FBLi[E]}}  \\
        &  E \ar@{->}[lu]^{\phi_E} \ar@{->}[u]^{\delta_E} \ar@{->}[ru]_{\phi_E} &
	}
    \end{equation*}
    it is clear that
    \begin{equation*}
        \check{K}_E\circ \hat{\delta}_E^{**} \circ K_E= \check{K}_E\circ J_{\Ccal(B_{E^*})}\circ \hat{\delta}_E \circ \phi_E = \check{K}_E\circ J_{\Ccal(B_{E^*})}\circ \delta_E = K_E.
    \end{equation*}
    By the uniqueness of extension of $K_E$ to $\FBLi[E]^{**}$ established in \Cref{prop: bidual of FBLp is FBL*p}, we conclude that $\check{K}_E\circ \hat{\delta}_E^{**}$ must be the identity on $\FBLi[E]^{**}$, so the result follows.
\end{proof}

\begin{rem}\label{rema: bidual of FBLinfty and bidual of C(BE*) not canonically isomorphic}
    Note that $\hat{\delta}_E^{**}$ is an embedding, but never surjective. Indeed, if $\hat{\delta}_E^{**}$ were a surjective isomorphism, since it is also a lattice homomorphism, it would preserve the order unit. Let us denote by $e$ the order unit of $\FBLi[E]^{**}$. Then, $\hat{\delta}_E^{**} e$ is also an order unit in $\Ccal(B_{E^*})^{**}$, so there exists some $\lambda \geq 0$ such that $J_{\Ccal(B_{E^*})}\uno_{E^*}\leq \lambda \hat{\delta}_E^{**} e$. In particular, evaluating both elements in the evaluation functional $\eta_0\in \Ccal(B_{E^*})^*$ we obtain that
    \begin{align*}
        1 & = \uno_{E^*}(0)= \eta_0(\uno_{E^*})= J_{\Ccal(B_{E^*})}\uno_{E^*}(\eta_0)\\
        & \leq \lambda (\hat{\delta}_E^{**} e) (\eta_0)= \lambda e (\hat{\delta}_E^*\eta_0) = \lambda e (\widehat{0})= \lambda e (0)=0,
    \end{align*}
    where we have used that $\hat{\delta}_E^*\eta_x^*=\widehat{x^*}$ for every $x^*\in B_{E^*}$ and that $\widehat{0}$, the extension of the null functional $0\in E^*$ to $\FBLi[E]$ as a lattice homomorphism, is the null functional $0\in \FBLi[E]^*$. This leads us to a contradiction, so $\hat{\delta}_E^{**}$ cannot be an isomorphism.
\end{rem}

\section*{Acknowledgements}

We wish to thank Antonio Avil\'es, David de Hevia, Mitchell A. Taylor and Timur Oikhberg for interesting discussions related to the content of this paper. We are also grateful to the anonymous referees for their comments and suggestions.

\end{document}